\theoremstyle{plain}
\newtheorem*{mainthm}{Main Theorem}
\newtheorem{thm}{Theorem}[section]
\newtheorem{prop}[thm]{Proposition}
\newtheorem{lemma}[thm]{Lemma}
\newtheorem{conj}[thm]{Conjecture}
\theoremstyle{definition}
\newtheorem{definition}[thm]{Definition}
\newtheorem{remark}[thm]{Remark}
\newtheorem{examples}[thm]{Examples}
\newtheorem{notation}[thm]{Notation}
\newtheorem{example}[thm]{Example}
\numberwithin{equation}{section}
\newcommand{\Tor}{\mathrm{Tor}}
\newcommand{\HS}{\mathrm{HS}}
\newcommand{\HF}{\mathrm{HF}}
\newcommand{\Char}{\mathrm{char}}
\newcommand{\ch}{\mathrm{char}}
\newcommand{\codim}{\mathrm{codim}}
\newcommand{\initial}{\mathrm{in}}
\newcommand{\lex}{\mathrm{lex}}
\newcommand{\bideg}{\mathrm{bideg}}
\newcommand{\mm}{\mathfrak{m}}
\newcommand{\ff}{\mathfrak{f}}
\newcommand{\opp}{{\overline{\wp}}}
\newcommand{\bfd}{{\mathbf{d}}}
\newcommand{\obfd}{{\overline{\mathbf{d}}}}
\newcommand{\LL}{{\mathrm{L}}}
\newcommand{\ovS}{{\overline{S}}}
\title{On the Lex-Plus-Powers conjecture}
\author{Giulio Caviglia \and Alessio Sammartano}
\address[Giulio Caviglia]{Department of Mathematics, Purdue University, 150 N. University Street, West Lafayette, IN 47907, USA}
\email{gcavigli@purdue.edu}
\address[Alessio Sammartano]{Department of Mathematics, University of Notre Dame, 255 Hurley, Notre Dame, IN 46556, USA}
\email{asammart@nd.edu}
\subjclass[2010]{Primary: 13D02. Secondary: 13C40; 13D40; 13F20; 14M06; 14M10}
\keywords{Eisenbud-Green-Harris Conjecture; Hilbert function; Betti numbers; syzygies; free resolution; complete intersection; regular sequence; linkage; lexsegment ideal.} 
\begin{document}

\begin{abstract}
Let  $S$ be a polynomial ring over a field and $I\subseteq S$  a homogeneous  ideal containing a regular sequence of forms of degrees $d_1, \ldots, d_c$.
In this paper we prove the Lex-plus-powers Conjecture when the field has characteristic 0 for all regular sequences such that $d_i \geq \sum_{j=1}^{i-1} (d_j-1)+1$ for each $i$;
that is,
we show that the Betti table of $I$ is bounded above by the Betti table of the lex-plus-powers ideal of $I$.
\end{abstract}

\maketitle

\section{Introduction}

Let $S$ be a polynomial ring over a field.
The celebrated theorem of Macaulay 	\cite{M27} asserts the existence of a one-to-one correspondence 
between Hilbert functions of homogeneous ideals in $S$ and \emph{lex} ideals, i.e. ideals which in each degree are generated by an initial segment of monomials in the lexicographic order.
The result may be phrased equivalently in terms of bounds for the growth of the graded components of an ideal, 
or as the statement that lex ideals have the largest number of minimal generators $\beta_{0,j}$ allowed by their Hilbert function in each degree $j$.
An elegant generalization of this result is the Bigatti-Hulett-Pardue Theorem \cite{B93,H93,P96}, which states that in fact lex ideals have the largest possible graded Betti numbers  $\beta_{i,j}$  in every homological degree $i$ and internal degree $j$, yielding thus a unique maximal element in the poset of Betti tables for each Hilbert function.
A crucial tool in both theorems is the use of Gr\"obner deformations, which allow to build a flat family connecting an arbitrary homogeneous ideal to a monomial ideal fixed under the action of the Borel group.

In several geometric situations, 
related e.g. to questions about configurations of points in $\mathbb{P}^m$ or  Hilbert schemes of projective varieties other than $\mathbb{P}^m$,
it is desirable to have refinements of these two theorems which take into account not just the numerical data of an ideal, but more precise information about its structure. 
With these regards, there are two long standing conjectures on the graded invariants of a homogeneous ideal containing  a regular sequence of known degrees:
the Eisenbud-Green-Harris Conjecture and the Lex-plus-powers Conjecture.

The first conjecture was proposed in \cite{EGH93,EGH96}  with the aim of generalizing a classical theorem of  Castelnuovo as well as  Cayley-Bacharach-type theorems on zero-dimensional subschemes of $\mathbb{P}^m$.
Actually, the authors formulated a series of related conjectures; the most general statement may be given  as follows:

\begin{conj}[Eisenbud-Green-Harris]\label{ConjectureEGH}
Let $I\subseteq S=\Bbbk[x_1,\ldots, x_n]$ be a homogeneous ideal containing a regular sequence of degrees $d_1\leq  \cdots \leq d_c$. 
Then there exists a lex ideal $L\subseteq S$ such that $I$ has the same Hilbert function as $L+(x_1^{d_1}, \ldots, x_c^{d_c})$.
\end{conj}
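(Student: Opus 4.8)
The plan is to prove Conjecture~\ref{ConjectureEGH} by splitting the regular sequences into those with ``spread-out'' degrees, where $\mathfrak a=(f_1,\dots,f_c)$ can be monomialized, and those with ``clustered'' degrees, which form the genuine core of the conjecture. First come the standard reductions. Since neither Hilbert functions nor the hypothesis are affected by enlarging the field, assume $\Bbbk$ infinite; a general linear change of coordinates puts $f_1,\dots,f_c$ in general position, and cutting by a general linear subspace reduces the statement --- through the usual bookkeeping relating $\HF(S/I)$ to that of an Artinian reduction, together with the fact that a variable occurring as a pure power in a lex-plus-powers ideal lets one descend to a polynomial ring in fewer variables --- to the case $\dim S/I=0$. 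So we may assume $d_1\le\cdots\le d_c$, $\mathfrak a=(f_1,\dots,f_c)$, $I\supseteq\mathfrak a$, $S/I$ Artinian, and the goal is a lex ideal $L$ with $\HF(S/I)=\HF\big(S/(L+(x_1^{d_1},\dots,x_c^{d_c}))\big)$.

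For degrees satisfying the gap bound $d_i\ge\sum_{j<i}(d_j-1)+1$ for all $i$ --- the range in which the present paper works --- I would fix a term order refining the $(d_1,\dots,d_c,1,\dots,1)$-weighted degree and pass to generic initial ideals: under this bound a degree count forces $\mathrm{in}(I)$ to contain a monomial complete intersection of degrees $d_1,\dots,d_c$, and the Clements--Lindstr\"om theorem (applicable precisely because $d_1\le\cdots\le d_c$, with the remaining variables carrying ``power $\infty$'') then yields a lex-plus-powers ideal with the same Hilbert function as $\mathrm{in}(I)$, hence as $I$. The obstruction to running this in general is that for a regular sequence of clustered degrees --- quadrics being the extreme case --- \emph{no} term order and no coordinate change make $\mathrm{in}(\mathfrak a)$ a monomial complete intersection of degrees $d_1,\dots,d_c$ (the generic initial ideal of a general complete intersection is visibly not of that shape), so EGH for clustered degrees cannot be reduced to the monomial case this way.

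To reach the clustered regime I would attempt a double induction on the pair $(n,\sum_i d_i)$ built on a general hyperplane section. For $x_n$ general, $\HF(S/I)(j)=\HF\big(S/(I+(x_n))\big)(j)+\HF\big(S/(I:x_n)\big)(j-1)$; after a general change of coordinates $\overline f_1,\dots,\overline f_c$ remain a regular sequence in $S/(x_n)$ --- an Artinian complete intersection of the same degrees in $n-1$ variables --- so induction on $n$ handles the first summand. One then wants the colon ideal $I:x_n$ to contain a regular sequence in which one degree has dropped by one, mirroring the comparison ideal, where coloning $L+(x_1^{d_1},\dots,x_c^{d_c})$ by $x_c$ turns $x_c^{d_c}$ into $x_c^{d_c-1}$; this would lower $\sum_i d_i$, close the induction on the second summand, and leave only the reassembly --- checking that the function obtained is dominated by, hence by the already-settled case of each piece equal to, a lex-plus-powers value. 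The base cases would be $c$ small and, again, Clements--Lindstr\"om.

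The hard part --- and the reason an unconditional result is currently confined to the gap range --- is exactly the colon step: for a general complete intersection of clustered degrees, coloning by a linear form need \emph{not} produce any form of lower degree (for a single quadric $q$ one has $(q):\ell=(q)$ generically), so the induction on $\sum_i d_i$ stalls, and the monomial compression/shifting operations behind Clements--Lindstr\"om and Kruskal--Katona cannot be transplanted because they wreck containment of a non-monomial regular sequence of prescribed degrees. A complete proof therefore seems to demand a genuinely new tool: either a ``compression'' operator acting directly on the forms $f_i$ that preserves both the Hilbert function and the degrees of the regular sequence, or a Macaulay-type bound on Hilbert-function growth intrinsic to the complete intersection $\mathfrak a$ and independent of any term order --- one flexible enough to subsume the cases already known (pure powers, products of linear forms, and spread-out degrees) in a single stroke.
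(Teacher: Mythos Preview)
The statement you were asked to prove is Conjecture~\ref{ConjectureEGH}, the Eisenbud--Green--Harris Conjecture. The paper does \emph{not} prove it: it is stated as an open conjecture, and the paper's contribution is to the Lex-plus-powers Conjecture under the degree hypothesis $d_i\ge\sum_{j<i}(d_j-1)+1$, using the special case of EGH already established by Caviglia--Maclagan (Proposition~\ref{PropositionEGHCavigliaMaclagan}) as input. So there is no proof in the paper to compare your proposal against.

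Your proposal is candid about this: you outline the standard reductions and the monomialization argument that works in the gap range (this is essentially the Caviglia--Maclagan proof cited as \cite{CM08}), you correctly identify the failure of Gr\"obner degeneration to preserve a complete intersection of clustered degrees, you sketch a hyperplane-section induction and explain precisely why the colon step breaks down generically, and you conclude by saying a new tool is needed. That is an accurate assessment of the state of the problem, not a proof. The genuine gap is the one you name yourself: outside the gap range there is no known mechanism --- neither initial-ideal passage nor the colon-by-a-linear-form induction --- that simultaneously controls the Hilbert function and retains a regular sequence of the prescribed degrees. Your write-up is a reasonable survey of why the conjecture is hard, but it does not close that gap, and neither does the paper.
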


If true, Conjecture \ref{ConjectureEGH} would yield more accurate inequalities than Macaulay's Theorem for the Hilbert function and degree of subschemes of $\mathbb{P}^m$.
For instance, a typical application is the following statement: 
if $\Gamma \subseteq \mathbb{P}^m$ is a zero-dimensional subscheme cut out by  $m+1$ quadric hypersurfaces, then $\deg(\Gamma) \leq 2^m-2^{m-2}$.

Observe that, as in Macaulay's Theorem,  if the  ideal $L+(x_1^{d_1}, \ldots, x_c^{d_c})$  exists then it is unique, and it has the largest number of generators among all ideals with the same Hilbert function  containing a complete intersection of degrees $d_1\leq  \cdots \leq d_c$. 
The second conjecture, attributed to Charalambous and Evans in \cite{FR07}, predicts the same extremal behavior for all the syzygies:

\begin{conj}[Lex-plus-powers]\label{ConjectureLPP}
Let $I\subseteq S=\Bbbk[x_1,\ldots, x_n]$ be a homogeneous ideal containing a regular sequence of degrees $d_1\leq  \cdots \leq d_c$.
If there exists a lex ideal
 $L\subseteq S$  such that $I$ has the same Hilbert function as $L+(x_1^{d_1}, \ldots, x_c^{d_c})$, then
 $\beta_{i,j}^S(I) \leq \beta_{i,j}^S\big(L+(x_1^{d_1}, \ldots, x_c^{d_c})\big)$ for all $i,j\geq 0$.
\end{conj}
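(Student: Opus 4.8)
The plan is to prove Conjecture~\ref{ConjectureLPP} by a chain of reductions followed by an induction built on linkage; the scheme below is complete when $\Char\Bbbk=0$ and the degree sequence satisfies $d_i\geq\sum_{j<i}(d_j-1)+1$ for every $i$, and I will indicate where the remaining cases run into genuine difficulties. The base case $c=0$ is the Bigatti--Hulett--Pardue theorem recalled above, so assume $c\geq1$. First reduce to the Artinian case: quotienting by a generic linear form that is a nonzerodivisor both on $S/I$ and on $S/(L+P)$ leaves all graded Betti numbers unchanged and is compatible with the lex-plus-powers construction, so after $n-c$ such steps we may assume $n=c$, so that $P:=(x_1^{d_1},\ldots,x_c^{d_c})$ is a homogeneous system of parameters and $S/I$ is Artinian. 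Next, a Gr\"obner degeneration in characteristic $0$ replaces $I$ by its generic initial ideal: this preserves the Hilbert function, never decreases a $\beta_{i,j}$, and stays inside a regular sequence of degrees $d_1,\ldots,d_c$; since a strongly stable Artinian monomial ideal containing a regular sequence of forms of degrees $d_1,\ldots,d_c$ must contain pure powers $x_1^{e_1},\ldots,x_c^{e_c}$ with $e_i\leq d_i$, hence contains $P$, we may finally assume $I$ is strongly stable with $P\subseteq I$. It now suffices to show $\beta_{i,j}^S(S/I)\leq\beta_{i,j}^S\bigl(S/(L+P)\bigr)$ for all $i,j$, where $L+P$ is the unique lex-plus-powers ideal with $\HF(S/(L+P))=\HF(S/I)$.

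For the inductive core, use that $P$ is a Gorenstein Artinian complete intersection, so it links $I$ to $I^\vee:=P:I$. By Peskine--Szpiro linkage the minimal free resolution of $S/I^\vee$ is the $\Bbbk$-dual of that of $S/I$, shifted by $\sigma:=\sum_k d_k$ and modified in top homological degree only by the $c$ Koszul generators of $P$; in particular $\beta_{i,j}^S(S/I^\vee)=\beta_{c-i,\sigma-j}^S(S/I)$ for $0<i<c$, and $\HF(S/I)$ and $\HF(S/I^\vee)$ determine each other. Applying the same link to $L+P$ produces the monomial ideal $P:(L+P)=P:L$, which, using the Clements--Lindstr\"om structure of the complete intersection $S/P$, one argues is, after relabeling the variables, again of lex-plus-powers type, and whose quotient has the Hilbert function forced by $\HF(S/(L+P))$; thus $S/I^\vee$ and this linked ideal again fall under the conjecture. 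Since $\mathrm{colength}(I)+\mathrm{colength}(I^\vee)=\prod_k d_k$, at least one of $I,I^\vee$ has colength at most half of $\prod_k d_k$; organize the induction so that linkage is invoked only when it strictly lowers the colength, and in the complementary ``small colength'' range descend instead by peeling a pure power or a variable---replacing $I$ by the pair $I:x_c$ and $(I,x_c)$, or by its restriction to fewer variables---which strictly decreases the inductive parameter and preserves the structural hypotheses. Feeding the resulting Betti-table comparisons back through the linkage duality yields the inequalities for $I$.

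The step I expect to be the real obstacle---and the one that currently forces the two extra hypotheses---is making all of this produce honest bounds on the individual $\beta_{i,j}$. Every comparison above ultimately needs an explicit description of the minimal free resolution of the lex-plus-powers ideal $L+P$: that it is obtained without cancellation from an iterated mapping cone over the Koszul complex on $x_1^{d_1},\ldots,x_c^{d_c}$ and a minimal resolution of the lex part, so that $\beta_{i,j}^S(S/(L+P))$ is given by a closed combinatorial formula, that this formula is compatible with Artinian reduction, and that the linked ideal $P:L$ is visibly lex-plus-powers again. All of this is available when $d_i\geq\sum_{j<i}(d_j-1)+1$ for all $i$, because then the pure powers are ``spread out'' enough that the Koszul strand and the lex strand of the resolution cannot cancel against each other; for a general nondecreasing sequence $d_1\leq\cdots\leq d_c$ (already for $(2,2,2)$) the two strands interact, the lex-plus-powers resolution can acquire cancellations whose position is not understood, and the inductive comparison can fail in intermediate homological degrees---which is precisely why the conjecture remains open. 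A secondary obstacle is characteristic $p>0$: the Gr\"obner reduction to a strongly stable ideal containing $P$ relies on features of generic initial ideals special to characteristic $0$, so one would need a characteristic-free substitute (polarization or distraction arguments) even to set up the argument.
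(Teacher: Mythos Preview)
Your proposal contains two genuine gaps, and the first is fatal.

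\textbf{The Gr\"obner reduction does not work.} You assert that taking the generic initial ideal ``stays inside a regular sequence of degrees $d_1,\ldots,d_c$'' and that a strongly stable Artinian ideal containing such a regular sequence must contain $x_i^{e_i}$ with $e_i\le d_i$. Both claims fail already for three generic quadrics in $\Bbbk[x_1,x_2,x_3]$: the ideal $I=(f_1,f_2,f_3)$ has $\mathrm{gin}(I)=(x_1^2,x_1x_2,x_2^2,x_1x_3^2,x_2x_3^2,x_3^4)$, which contains \emph{no} regular sequence of degrees $(2,2,2)$ (its quadratic part lies in $(x_1,x_2)$) and does not contain $x_3^2$. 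This is precisely the obstacle the paper names in the introduction: Gr\"obner deformations do not reduce to the monomial complete intersection $\wp$. Once this step collapses, your entire inductive scheme has no starting point, because the Mermin--Murai theorem you would then invoke requires $\wp\subseteq I$, which you cannot arrange.

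\textbf{The linkage Betti duality is misstated.} For Artinian $I$ linked to $J$ via $\ff$, Peskine--Szpiro gives $\beta^S_{k,\ell}(J/\ff)=\beta^S_{n-k,\sigma-\ell}(S/I)$, relating $S/I$ to $J/\ff$, not to $S/J$. The passage from $J/\ff$ to $S/J$ goes through the exact sequence $0\to J/\ff\to S/\ff\to S/J\to 0$, which does not yield the clean equality $\beta^S_{i,j}(S/J)=\beta^S_{c-i,\sigma-j}(S/I)$ you use. A two–variable check ($\ff=(x^2,y^2)$, $I=\mm^2$, $J=\mm$) already falsifies your formula at $i=1$.

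\textbf{Comparison with the paper.} The paper's proof of the special case never reduces $I$ to a monomial ideal. Instead it works directly with $I$: after reducing to the Artinian setting and choosing $x_n$ general, the hypothesis $d_n>\sum_{j<n}(d_j-1)$ forces $f_n\in(f_1,\ldots,f_{n-1},x_n)$, so one may take $f_n=x_ng$. The argument then decomposes $I/x_nI$ via the sequence $0\to x_n(I{:}x_n)/x_nI\to I/x_nI\to (I+(x_n))/(x_n)\to 0$ and bounds each piece over $\ovS$ by induction. Linkage enters only to compare \emph{Hilbert functions} (equation~\eqref{EqHilbertFunctionLinkage}), never Betti numbers, and the crucial new input is Lemma~\ref{LemmaHyperplaneSectionInequalityExtension}, a Green–type hyperplane inequality in characteristic~$0$ that controls $\HF(I+(x_n^i))$ against $\HF(\LL^\bfd(I)+(x_n^i))$. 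Your diagnosis that the obstacle lies in the resolution of $L+\wp$ is also off: Murai's formulas for $\beta^S_{i,j}(L+\wp)$ hold for every degree sequence; the genuine difficulty is comparing the arbitrary regular sequence to the monomial one, which is exactly where your gin step breaks.
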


One can make these sharp upper bounds explicit by means of the formulas for Betti numbers of Borel-plus-powers ideals found in  \cite{M08}.
If Conjecture \ref{ConjectureLPP} were true, it would restrict considerably the possible shapes of free resolutions of ideals containing a given complete intersection.
It is worth noticing that, 
although the two statements are apparently independent of each other, 
Conjecture \ref{ConjectureEGH} is actually equivalent to the special case $i=0$ of Conjecture \ref{ConjectureLPP}, see e.g. \cite[Conjecture 2.9]{R04}.

Both conjectures are wide open. 
The classical  Clements-Linstr\"om Theorem \cite{CL69} settles Conjecture \ref{ConjectureEGH} for ideals $I$  already containing $(x_1^{d_1}, \ldots, x_c^{d_c})$.
Later,  the first author and Maclagan \cite{CM08} verified it for arbitrary regular sequences satisfying $d_i \geq \sum_{j=1}^{i-1} (d_j-1)+1$ for all $i \geq 3$.
See also \cite{A15,CCV14,C16,C12,G99,HP98,O02} for other special cases.
On the other hand, much less is known about Conjecture \ref{ConjectureLPP}, cf. \cite{F04,R04}.
The  case when $I$ already contains $(x_1^{d_1}, \ldots, x_c^{d_c})$ was settled only  recently and with complicated proofs, first in 
\cite{MPS08} when $d_1= \cdots = d_n = 2$ and $\ch(\Bbbk)=0$, and then in \cite{MM11} in general.
A main obstacle in both problems is the failure of Gr\"obner techniques to reduce to a monomial complete intersection while at the same time keeping track of homological data.

The main result of this paper settles Conjecture \ref{ConjectureLPP} in a large number of cases:

\begin{mainthm}
Assume that $\ch(\Bbbk)=0$.
The Lex-plus-powers Conjecture holds for all regular sequences whose degrees satisfy $d_i \geq \sum_{j=1}^{i-1} (d_j-1)+1$ for all $i\geq 3$.
\end{mainthm}

We point out that the Main  Theorem can be used to estimate the Betti table of any homogeneous ideal $I$ in characteristic 0, 
since if $I$ contains a regular sequence $\{f_1, \ldots, f_c\}$ then it contains another one $\{f_1, f_2, f'_3, \ldots, f'_c\}$ satisfying the hypothesis on  degrees.
In this way, the Main Theorem  provides general upper bounds for the Betti numbers that are sharper than those of the Bigatti-Hulett-Pardue Theorem,
but possibly worse than those predicted by the full strength of Conjecture \ref{ConjectureLPP},
cf. Section \ref{SectionExamples}.

The paper is organized as follows.
In Section \ref{SectionPreliminaries} we fix the notation and provide  background about lex-plus-powers ideals.
Section \ref{SectionProof} is dedicated to the proof of the main result. 
This is achieved by  inductively decomposing  ideals into smaller modules
and estimating their Betti numbers.
A crucial role in controlling the size of the smaller modules is played by Lemma \ref{LemmaHyperplaneSectionInequalityExtension},
which is inspired by Green's Hyperplane Restriction Theorem and only holds in characteristic 0.
Finally, in Section \ref{SectionExamples} we illustrate some explicit  bounds on Betti tables obtained from the Main Theorem.

\section{Preliminaries}\label{SectionPreliminaries}

In this section we fix the notation for the remainder of the paper and  give some definitions and preliminary results. 
We refer to \cite{E95} for background.

The symbol $\Bbbk$ denotes an arbitrary field;
in the main results we will need to assume  $\Char(\Bbbk)=0$.
 Let $R$ be a standard graded $\Bbbk$-algebra, we denote the unique maximal homogeneous ideal by $\mm_R$.
The length of an $R$-module is denoted by $\ell_R(M)$.
If $M$ is a graded $R$-module, $[M]_j$ is  the graded component of $M$ of degree $j$, and the Hilbert function of $M$ is the numerical function $\HF(M): \mathbb{Z}\rightarrow\mathbb{Z}$ such that $\HF(M;j) = \dim_\Bbbk [M]_j$.
For two graded modules $M,N$,
the expression $\HF(M) \preceq \HF(N)$ means that
 $\HF(M;j) \leq \HF(N;j)$ for all $j\in \mathbb{Z}$.
The symbol $M(j)$ denotes the graded module obtained from $M$ by twisting $j$ times, so that $[M(j)]_h = [M]_{j+h}$ for all $h \in \mathbb{Z}$.
We denote the graded Betti numbers of $M$ by 
$\beta_{i,j}^R(M) = \dim_\Bbbk [ \Tor_i^R(M,\Bbbk)]_j$ for all $i \in \mathbb{N}, j \in \mathbb{Z}$.

Throughout this paper, $S = \Bbbk[x_1, \ldots, x_n]$ denotes the polynomial ring in $n$ variables.
A \textbf{degree sequence} for $S$ is a vector $\bfd=(d_1, \ldots, d_n)$ where $d_i \in \mathbb{N}\cup\{\infty\}$ and $1 \leq d_1 \leq \cdots \leq d_n$.
We adopt the usual arithmetic conventions for $\mathbb{N}\cup\{\infty\}$ and  furthermore we set $x_i^\infty = 0$ for each $i$.
A \textbf{complete intersection} in $S$ is an ideal $\mathfrak{f} \subseteq S$ generated by a regular sequence of homogeneous forms $\{f_1, \ldots, f_c\}$;
we will always assume, without loss of generality, that $\deg(f_1) \leq \cdots \leq \deg(f_c)$.
The degree sequence of $\mathfrak{f}$ is the vector with $n$ entries $(\deg(f_1) , \ldots, \deg(f_c), \infty, \ldots, \infty)$.
We emphasize that the degree sequence of a complete intersection in $S$ has always length $n$, regardless of the actual codimension.
Observe that the Hilbert function of a complete intersection  is  uniquely determined by its degree sequence.

The variables of $S$ are ordered by $x_1 > x_2 > \cdots > x_n$ and we consider the lexicographic monomial order on $S$, denoted by $>_\lex$.
A monomial ideal $L\subseteq S$ is \textbf{lex} if for any two monomials $u,v\in S$ with $\deg(u) = \deg(v)$ and $u >_\lex v$ we have  $u\in L$ whenever $v\in L$.
By Macaulay's theorem, lex ideals in $S$ are in a one-to-one correspondence with Hilbert functions of ideals of $S$.
A monomial ideal $I\subseteq S$ is \textbf{$x_n$-stable} if for any monomial $u\in I$ divisible by  $x_n$ we have $\frac{x_i u}{x_n} \in I$ for all $i<n$.
Observe that a lex ideal is $x_n$-stable.

The next definitions play a central role in this paper. 
They extend  the definitions above to ideals containing prescribed pure powers of the variables of $S$.

\begin{definition}\label{DefinitionLPPandSPP}
Let $\bfd=(d_1, \ldots, d_n)$ be a degree sequence and  $\wp=(x_1^{d_1}, \ldots, x_n^{d_n})$. 
\begin{enumerate}
\item A \textbf{$\bfd$-lex-plus-powers} ideal or simply \textbf{$\bfd$-LPP} ideal is a monomial ideal of $S$ of the form $ L+\wp$ where $L$ is a lex ideal;
\item A \textbf{$\bfd$-stable-plus-powers} ideal or simply \textbf{$\bfd$-SPP} ideal is a monomial ideal of $S$ of the form $ I+\wp$ where $I$ is an $x_n$-stable ideal.
\end{enumerate}
\end{definition}
\noindent
We point out that, 
unlike   some sources in literature, in Definition \ref{DefinitionLPPandSPP} we do not require the generators of $\wp$  to be minimal generators of $L+\wp$ or $I+\wp$. 
Thus, a monomial ideal may be $\bfd$-SPP or $\bfd$-LPP for more than one degree sequence $\bfd$.

\begin{examples}
Let $n=3$ and $S= \Bbbk[x_1,x_2,x_3]$.
\begin{itemize}
\item  $I= (x_1^3 , x_1^2x_2,x_1^2x_3, x_2^3, x_1^2x_3^2, x_3^4)\subseteq S$
is $\bfd$-LPP if and only if $\bfd=(3,3,4)$.
\item  $I= (x_1^3 , x_1^2x_2, x_1^2x_3, x_1x_2^2, x_2^3, x_1^2x_3^2, x_3^4)\subseteq S$
is $\bfd$-LPP if and only if $\bfd=(d_1,d_2,4)$ with $3 \leq d_1 \leq d_2 \leq 4$.
\item  $I= (x_1^3 , x_1^2x_2, x_1^2x_3, x_2^3, x_1^2x_3^2, x_1x_2x_3^2, x_2^2x_3^2, x_1 x_3^3, x_2x_3^3, x_3^4)\subseteq S$
is $\bfd$-LPP if and only if $\bfd=(3,3,d_3)$ with $d_3 \geq 4$.

\end{itemize}
\end{examples}

The basic properties of LPP ideals  in the next proposition follow from their counterparts for lex ideals in $S$.

\begin{prop}\label{PropositionBasicPropertiesLPP}
Let $\bfd$ be a degree sequence.
\begin{enumerate}
\item If $L\subseteq S$ is a $\bfd$-LPP ideal, then it is a $\bfd$-SPP ideal;

\item if $L_1,L_2 \subseteq S$ are $\bfd$-LPP ideals  with $\HF(L_1) \preceq \HF(L_2)$, then $L_1 \subseteq L_2$.
\end{enumerate}

\end{prop}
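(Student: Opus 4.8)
The plan is to derive both statements directly from the facts about lex ideals of $S$ recorded just above the proposition, with the only real work being a degree-by-degree translation in part (2).

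For part (1), everything is already in place: a lex ideal is $x_n$-stable, so writing a $\bfd$-LPP ideal as $L' + \wp$ with $L'$ lex, the ideal $L'$ is in particular $x_n$-stable, and $L' + \wp$ is then by definition a $\bfd$-SPP ideal. No further argument is needed.

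For part (2), I would fix a degree $j$ and introduce the set $B_j$ of monomials of degree $j$ not divisible by any $x_i^{d_i}$; note that $B_j$ depends only on $\bfd$. Writing $L_t = L_t' + \wp$ with $L_t'$ lex for $t=1,2$, the degree-$j$ monomials of $L_t$ fall into two groups: those outside $B_j$, which are \emph{all} of them since each such monomial is a multiple of some $x_i^{d_i} \in \wp$ and hence lies in $L_t$ independently of $t$; and those inside $B_j$, which are exactly the monomials of $L_t'$ that lie in $B_j$. Since $L_t'$ is lex, its degree-$j$ monomials form an initial segment for $>_{\lex}$, so the monomials of $L_t'$ in $B_j$ form an initial segment of $B_j$ for the induced order, and the cardinality of this segment is determined by $\HF(L_t;j)$ and $\bfd$ alone. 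Therefore $\HF(L_1)\preceq\HF(L_2)$ forces, in each degree $j$, the initial segment of $B_j$ attached to $L_1$ to be contained in the one attached to $L_2$; adjoining the common set of degree-$j$ monomials divisible by a pure power then yields $[L_1]_j \subseteq [L_2]_j$ as spans of monomials, for every $j$, and hence $L_1 \subseteq L_2$.

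The only subtle point — and the closest thing to an obstacle — is that an LPP ideal is generally not lex, since the pure powers $x_i^{d_i}$ interrupt lexsegments, so one cannot quote the lex-ideal containment statement for $S$ verbatim. The remedy is precisely the restriction to the admissible monomials $B_j$ above, on which the lex structure does persist; once that observation is made, the rest is routine bookkeeping.
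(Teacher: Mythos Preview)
Your proposal is correct and matches the paper's approach: the paper gives no detailed proof, remarking only that these properties ``follow from their counterparts for lex ideals in $S$,'' and your argument supplies exactly those details. The passage through the set $B_j$ of degree-$j$ monomials free of the pure powers is precisely the way one makes that reduction rigorous, and your identification of the one subtlety (that an LPP ideal need not itself be lex) is apt.
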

\noindent
In particular, for each $\bfd$ there exists at most one $\bfd$-LPP ideal in $S$ with a given Hilbert function.
This motivates the following definition.

\begin{definition}
Let $I\subseteq S$ be a homogeneous ideal and $\bfd$ a degree sequence. 
If there exists a $\bfd$-LPP ideal in $S$ with the same Hilbert function as $I$, we denote it by $\LL^\bfd(I)$ and we refer to it as the  \textbf{$\bfd$-LPP ideal of $I$}.
\end{definition}

The existence of $\LL^\bfd(I)$ for every ideal $I$ containing a complete intersection of degree sequence $\bfd$ is precisely the content of Conjecture \ref{ConjectureEGH}.
It is guaranteed for those $\bfd$ that increase ``quickly enough''.

\begin{prop}
[{\cite[Theorem 2]{CM08}}]\label{PropositionEGHCavigliaMaclagan}
Let $I\subseteq S$ be a homogeneous ideal containing a regular sequence of degrees $\bfd = (d_1, \ldots, d_n)$.
Assume that $d_i \geq \sum_{j= 1}^{i-1} (d_j -1)+1 $ for all $i\geq 3$, then $\LL^\bfd(I)$ exists.
\end{prop}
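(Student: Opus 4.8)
The plan is to reduce the statement to the Clements--Lindstr\"om theorem \cite{CL69}, used in the following form: for a degree sequence $\bfd$ with associated pure powers $\wp=(x_1^{d_1},\dots,x_c^{d_c})$, a numerical function is the Hilbert function of some $\bfd$-LPP ideal if and only if it is an $O$-sequence of $S/\wp$, i.e. satisfies the associated Clements--Lindstr\"om growth bounds; this follows from that theorem together with the accompanying Macaulay-type characterization. Thus it suffices to prove that $\HF(S/I)$ satisfies the Clements--Lindstr\"om bounds for $\bfd$, and I would do this by induction on $c$, the number of finite entries of $\bfd$.

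The base case $c=0$ is Macaulay's theorem. For the inductive step, put $\wp'=(x_1^{d_1},\dots,x_{c-1}^{d_{c-1}})$ and use two facts. First, $I$ contains the regular sequence $f_1,\dots,f_{c-1}$, of degrees $d_1,\dots,d_{c-1}$, and the truncated degree sequence $(d_1,\dots,d_{c-1},\infty,\dots,\infty)$ still satisfies the hypothesis; so by the inductive hypothesis $\LL^{(d_1,\dots,d_{c-1},\infty,\dots)}(I)$ exists, and therefore $\HF(S/I)$ satisfies the Clements--Lindstr\"om bounds attached to $\wp'$. Second, since $I\supseteq\ff$ and $\HF(S/\ff)=\HF(S/\wp)$, we get $\HF(S/I;j)\le\HF(S/\wp;j)$ for all $j$. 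It then remains to prove the purely combinatorial assertion that a numerical function satisfying the Clements--Lindstr\"om bounds for $\wp'$ and dominated pointwise by $\HF(S/\wp)$ necessarily satisfies the bounds for $\wp$. Here the hypothesis is used: writing $s'=\sum_{j<c}(d_j-1)$ for the top degree of the Artinian reduction of $S/\wp'$, the inequality $d_c\ge s'+1$ puts the pure power $x_c^{d_c}$ strictly beyond the staircase of $\wp'$, so that $S/\wp$ and $S/\wp'$ have identical graded pieces, and hence identical growth bounds, in degrees below $d_c$, while from degree $d_c$ onward the only new constraint coming from $x_c^{d_c}$ is already guaranteed by the pointwise inequality against $\HF(S/\wp)$. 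For $c\le 2$ one has $d_c\ge s'+1$ automatically (it reads $d_1\ge 1$ when $c=1$ and $d_2\ge d_1$ when $c=2$), so the induction runs uniformly and the hypothesis is genuinely invoked only when $c\ge 3$.

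I expect the main obstacle to be precisely this combinatorial assertion, since it is false without the degree hypothesis: for example $(1,3,2,1,0,\dots)$ satisfies the $(x_1^2,x_2^2)$-bounds and is dominated by $\HF\big(\Bbbk[x_1,x_2,x_3]/(x_1^2,x_2^2,x_3^2)\big)=(1,3,3,1,0,\dots)$, yet it is not a $(2,2,2)$-Clements--Lindstr\"om $O$-sequence. So the heart of the argument is a careful comparison of the two growth functions in terms of the staircases of $S/\wp$ and $S/\wp'$, exploiting $d_c\ge s'+1$. It seems worth emphasizing why the proof has to go this indirect route: there is no Gr\"obner or generic-initial degeneration that reduces an arbitrary complete intersection to the monomial one while controlling Hilbert functions, because the generic initial ideal of a complete intersection is strongly stable but---already for two general quadrics in three variables---not of lex-plus-powers type, so neither $\initial(\ff)$ nor $\initial(I)$ need contain $\wp$. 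The inductive passage through Clements--Lindstr\"om, one pure power at a time, is exactly what lets the hypothesis on the $d_i$ do its work.
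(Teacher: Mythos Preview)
The paper does not prove this proposition; it is quoted from \cite{CM08}. Your overall plan---induct on the number $c$ of finite degrees, invoke Clements--Lindstr\"om for $\wp'=(x_1^{d_1},\dots,x_{c-1}^{d_{c-1}})$, and use the pointwise bound $\HF(S/I)\preceq\HF(S/\wp)$---has the same shape as the argument in \cite{CM08}. However, the purely combinatorial assertion you isolate as the crux is \emph{false} as stated, even under the degree hypothesis. Take $n=c=3$, $\bfd=(2,2,3)$ (so $d_3=3=s'+1$), and $h=(1,3,3,3,1)$. Then $h$ satisfies the $\wp'$-bounds (it is realized by a $\wp'$-LPP ideal extending $(x_1,x_2)^2$) and $h\preceq\HF(S/\wp)=(1,3,4,3,1)$; yet $h$ violates the $\wp$-bounds, since from $h(2)=3$ the $\wp$-growth bound forces $h(3)\leq 2$. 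Your heuristic that ``from degree $d_c$ onward the only new constraint is guaranteed by $h\preceq\HF(S/\wp)$'' breaks precisely at the transition from degree $d_c-1$ to $d_c$: the pointwise bound $h(d_c)\leq\HF(S/\wp;d_c)$ is strictly weaker than the $\wp$-growth constraint coming from $h(d_c-1)$.

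The underlying problem is that by passing immediately to the two numerical conditions ``$h$ is a $\wp'$-$O$-sequence'' and ``$h\preceq\HF(S/\wp)$'' you discard essential information carried by the full regular sequence. In the counterexample above, $h$ is \emph{not} the Hilbert function of any $S/I$ with $I$ containing a regular sequence of degrees $(2,2,3)$: for instance, the linkage identity \eqref{EqHilbertFunctionLinkage} applied to such an $I$ would force $\HF(S/J;0)=\HF(S/\ff;4)-h(4)=0$ for the link $J=\ff:I$, which is absurd since $I\supsetneq\ff$ gives $J\neq S$. So the inductive step cannot be executed at the level of your two numerical invariants alone; the proof in \cite{CM08} retains more of the algebraic structure of $I$ and its regular sequence when passing from $c-1$ to $c$. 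Your outline correctly identifies the induction and the point at which the hypothesis enters, but the mechanism you propose for the inductive step does not work.
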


While Conjectures \ref{ConjectureEGH} and \ref{ConjectureLPP} are of interest for complete intersections of any codimension, in practice one can reduce to considering $\mm_S$-primary ideals, i.e. degree sequences $\bfd$ with $d_n < \infty$.

\begin{prop}[{\cite[Theorem 4.1]{CK14}}]\label{PropositionReductionToArtinianCavigliaKummini}
Let $\{f_1, \ldots, f_c\}\subseteq S$ be a regular sequence. 
If the Eisenbud-Green-Harris and Lex-plus-powers Conjectures hold for the image of $\{f_1, \ldots, f_c\}$ modulo $n-c$ general linear forms,  then they hold for $\{f_1, \ldots, f_c\}$.
\end{prop}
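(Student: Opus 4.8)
The plan is to induct on $n-c$, removing one general linear form at a time until the complete intersection becomes $\mm_S$-primary. If $n=c$ there is nothing to prove, so assume $n>c$, let $\ell$ be a general linear form, and set $\ovS=S/(\ell)\cong\Bbbk[x_1,\ldots,x_{n-1}]$; for a homogeneous ideal $I\supseteq\ff$ write $\overline I=I\ovS$ and $\overline\ff=\ff\ovS$. Since $\dim S/\ff=n-c\ge 1$, for general $\ell$ the images $\overline f_1,\ldots,\overline f_c$ form a regular sequence in $\ovS$ of the same degrees $d_1,\ldots,d_c$, and $\overline I\supseteq\overline\ff$. A general linear form of $S$ followed by $n-1-c$ general linear forms of $\ovS$ is a general choice of $n-c$ linear forms of $S$, so the hypothesis of the proposition for $\{f_1,\ldots,f_c\}$ entails the hypothesis for $\{\overline f_1,\ldots,\overline f_c\}$; thus the inductive hypothesis (the statement in $n-1$ variables) applies and provides both conjectures for $\overline I$ in $\ovS$. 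It remains to lift them to $I$ in $S$.

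I would first carry this out under the extra assumption that $\ell$ is a nonzerodivisor on $S/I$ — equivalently $\operatorname{depth}(S/I)\ge 1$, e.g.\ when $I$ is saturated. Then the exact sequence $0\to(S/I)(-1)\xrightarrow{\ \cdot\ell\ }S/I\to\ovS/\overline I\to 0$ yields $\HF(S/I;j)=\sum_{t\le j}\HF(\ovS/\overline I;t)$, and tensoring the minimal graded $S$-free resolution of $S/I$ with $\ovS$ gives a minimal $\ovS$-free resolution of $\ovS/\overline I$, so $\beta^S_{i,j}(I)=\beta^{\ovS}_{i,j}(\overline I)$ for all $i,j$. By the inductive hypothesis $\LL^{\obfd}(\overline I)$ exists; let $J\subseteq S$ be its flat extension along the polynomial extension $\ovS\subseteq S=\ovS[x_n]$. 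Then $J\supseteq(x_1^{d_1},\ldots,x_c^{d_c})$ (since $c<n$), and base change along $\ovS\subseteq S$ gives $\HF(S/J;j)=\sum_{t\le j}\HF(\ovS/\LL^{\obfd}(\overline I);t)=\sum_{t\le j}\HF(\ovS/\overline I;t)=\HF(S/I;j)$ together with $\beta^S_{i,j}(J)=\beta^{\ovS}_{i,j}(\LL^{\obfd}(\overline I))$. Applying the Clements--Linstr\"om Theorem to $J$, which contains $(x_1^{d_1},\ldots,x_c^{d_c})$, produces a $\bfd$-LPP ideal with the same Hilbert function as $I$; hence $\LL^\bfd(I)$ exists, which is the Eisenbud--Green--Harris Conjecture for $I$. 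For the Lex-plus-powers Conjecture I would feed $J$ into the theorem of \cite{MM11} (the case of ideals already containing the pure powers) and use that $\LL^\bfd(J)=\LL^\bfd(I)$, as the two $\bfd$-LPP ideals share a Hilbert function, obtaining
\[\beta^S_{i,j}(I)=\beta^{\ovS}_{i,j}(\overline I)\le\beta^{\ovS}_{i,j}\big(\LL^{\obfd}(\overline I)\big)=\beta^S_{i,j}(J)\le\beta^S_{i,j}\big(\LL^\bfd(J)\big)=\beta^S_{i,j}\big(\LL^\bfd(I)\big).\]

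The hard part, which I expect to be the technical heart of the argument, is removing the nonzerodivisor assumption, i.e.\ treating the case $\operatorname{depth}(S/I)=0$ — in particular the subcase where $S/I$ is itself Artinian, so that no linear form is regular on it. For a general $\ell$ the module $0:_{S/I}\ell=0:_{H^0_{\mm_S}(S/I)}\ell$ still has finite length, so $\HF(\ovS/\overline I;j)$ equals $\HF(S/I;j)-\HF(S/I;j-1)$ only up to a finite-length correction term, and likewise $\beta^S_{i,j}(I)$ and $\beta^{\ovS}_{i,j}(\overline I)$ agree only outside finitely many degrees. The plan is to compare $I$ with its saturation $I^{\mathrm{sat}}$ — to which the previous paragraph applies, since $\operatorname{depth}(S/I^{\mathrm{sat}})\ge 1$ — and to absorb the finite-length defect $I^{\mathrm{sat}}/I=H^0_{\mm_S}(S/I)$ directly, using the constraint $\HF(S/I)\preceq\HF(S/\ff)=\HF\big(S/(x_1^{d_1},\ldots,x_c^{d_c})\big)$ coming from $I\supseteq\ff$ to control the low-degree behaviour. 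Showing that $\bfd$-LPP-admissibility of $\HF(S/I)$ and the Betti-number bounds for $I$ survive this correction — invoking the Clements--Linstr\"om Theorem once more to convert the resulting numerical estimates back into the existence of a $\bfd$-LPP ideal — is where the bulk of the work lies.
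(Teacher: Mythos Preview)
The paper does not prove this proposition at all: it is simply quoted as \cite[Theorem 4.1]{CK14}. So there is no ``paper's own proof'' to compare against, and your proposal must be judged on its own merits and against what \cite{CK14} actually does.

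Your argument for the case $\operatorname{depth}(S/I)\ge 1$ is essentially correct and is a pleasant self-contained reduction: the key inputs are Clements--Lindstr\"om for the existence of $\LL^\bfd(J)$ once $J\supseteq (x_1^{d_1},\ldots,x_c^{d_c})$, and \cite{MM11} for the Betti inequality for such $J$. The chain of equalities and inequalities you write is valid.

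The genuine gap is the case $\operatorname{depth}(S/I)=0$, which you yourself flag as ``the hard part'' but do not actually prove. Your plan of comparing $I$ with $I^{\mathrm{sat}}$ breaks down completely in the most basic subcase: if $S/I$ is Artinian (which is perfectly possible when $n>c$, e.g.\ $I=\mm^N+\ff$), then $I^{\mathrm{sat}}=S$ and there is nothing to compare with. Even when $\dim(S/I)>0$ but $\operatorname{depth}(S/I)=0$, knowing the conjectures for $I^{\mathrm{sat}}$ does not obviously yield them for $I$: the Hilbert function of $I$ differs from that of $I^{\mathrm{sat}}$ by the finite-length module $H^0_{\mm_S}(S/I)$, and there is no general mechanism that says ``$\bfd$-LPP admissibility is closed under adding an arbitrary finite-length correction'', nor any reason the Betti bounds should survive such a perturbation. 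The constraint $\HF(S/I)\preceq\HF(S/\ff)$ is far too weak to control this. So as written, the proposal is incomplete.

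For context, the argument in \cite{CK14} is organized differently. Rather than splitting on the depth of $S/I$ and treating individual ideals, it works with \emph{poset embeddings}: order-preserving, Hilbert-function-preserving maps $\epsilon$ on the poset of ideals of $S/\wp$ (here $\wp=(x_1^{d_1},\ldots,x_c^{d_c})$), together with a compatible Betti inequality. The theorem shows that such an embedding on the Artinian reduction $\ovS/\opp$ lifts to one on $S/\wp$; this is done by filtering ideals by colon ideals with respect to the last variable (or a general linear form), in the spirit of \cite{CK13} and of Lemma~\ref{LemmaHyperplaneSectionInequalityExtension} of the present paper, and assembling the pieces. This approach handles all ideals uniformly and sidesteps the depth dichotomy entirely; in particular it never needs a nonzerodivisor on $S/I$.
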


In several arguments to follow, we will proceed by induction on the number $n$ of variables of $S$, and it is convenient to adopt the subsequent notation. 
Let $\ovS= \Bbbk[x_1,\ldots,x_{n-1}]$ and consider $S$ as the extension $\ovS[x_n]$.
We let $\bfd = (d_1, \ldots, d_n)$ be a degree sequence with $n$ entries, and denote by $\obfd = (d_1, \ldots, d_{n-1})$ the partial sequence of the first $n-1$ entries.
We let $\wp = (x_1^{d_1}, \ldots, x_n^{d_n}) $ be the monomial complete intersection in $S$ corresponding to $\bfd$, and denote by
$\opp = (x_1^{d_1}, \ldots, x_n^{d_{n-1}}) $  the monomial complete intersection in $\ovS$ corresponding to $\obfd$. 

A monomial ideal $I \subseteq S$ can be decomposed uniquely as 
$$
I = I_0 \oplus I_1 x_n \oplus I_2 x_n^2 \oplus \cdots
$$
where $I_i \subseteq \ovS$ is a monomial ideal and  $I_{i-1} \subseteq I_{i}$ for all $i > 0$.
Unless stated otherwise, in this paper the notation $I_i$  always refers to such decomposition of monomial ideals.
We list some basic properties of monomial ideals of $S$ in terms of this decomposition;
their proofs follow immediately from the definitions.

\begin{prop}\label{PropositionBasicPropertiesDecomposition}
Let $I\subseteq S$ be a monomial ideal and $\bfd$ a degree sequence.
\begin{enumerate}
\item $I$ is $\bfd$-SPP if and only if  $\wp\subseteq I$ and $\mm_\ovS I_i \subseteq I_{i-1}$ for all  $0<i<d_n $.

\item If $I$ is a  $\bfd$-SPP ideal then for all  $0\leq i<d_n $ we have

$$ \frac{(I: x_n^i)+ (x_n)}{(x_n)} \cong I_i  \subseteq \ovS.$$

\item If  $I$ is a  $\bfd$-LPP ideal, then $I_i\subseteq \ovS$ is a $\obfd$-LPP ideal for every $i$.

\end{enumerate}
\end{prop}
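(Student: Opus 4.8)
The plan is to unwind each assertion directly in terms of the decomposition $I = I_0 \oplus I_1 x_n \oplus I_2 x_n^2 \oplus \cdots$, using only the definitions. The one observation that makes everything go through is the elementary fact that for monomial ideals $A, B \subseteq S$ a monomial lies in $A+B$ if and only if it lies in $A$ or in $B$; combined with the dictionary ``writing a monomial $\mu \in S$ uniquely as $\mu = u x_n^j$ with $u \in \ovS$, one has $\mu \in I$ if and only if $u \in I_j$'', this reduces each part to a bookkeeping check on the increasing chain of monomial ideals $I_0 \subseteq I_1 \subseteq \cdots$ of $\ovS$. I will also use repeatedly that $\wp \subseteq I$ is equivalent to $\opp \subseteq I_0$ together with $x_n^{d_n} \in I$, and that in that case $\opp \subseteq I_j$ for all $j$ and $I_j = \ovS$ for all $j \geq d_n$.

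For part (1), the forward implication: write $I = J + \wp$ with $J$ an $x_n$-stable ideal and fix a monomial $u \in I_i$ with $0 < i < d_n$, so that $u x_n^i \in J$ or $u x_n^i \in \wp$. If $u x_n^i \in J$, then since $i > 0$ the $x_n$-stability of $J$ gives $x_\ell u x_n^{i-1} \in J$ for every $\ell < n$; if instead $u x_n^i \in \wp$, it must be a multiple of some $x_\ell^{d_\ell}$ with $\ell < n$ (it cannot be a multiple of $x_n^{d_n}$ because $i < d_n$), whence $x_\ell u$ is again a multiple of $x_\ell^{d_\ell}$ and $x_\ell u x_n^{i-1} \in \wp$. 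Either way $x_\ell u \in I_{i-1}$ for all $\ell < n$, which is $\mm_\ovS I_i \subseteq I_{i-1}$. For the converse, assume $\wp \subseteq I$ and $\mm_\ovS I_i \subseteq I_{i-1}$ for $0 < i < d_n$; since $I$ itself need not be $x_n$-stable (its behaviour around $x_n$-degree $d_n$ is unconstrained), I would exhibit a witnessing ideal by truncating and stabilizing the chain: let $J$ be the monomial ideal with $J_j = I_j$ for $j \leq d_n - 1$ and $J_j = I_{d_n - 1}$ for $j \geq d_n - 1$. The chain condition makes $J$ an ideal; the hypothesis $\mm_\ovS I_j \subseteq I_{j-1}$ handles $x_n$-stability in $x_n$-degrees below $d_n$, while the fact that each $I_j$ is an ideal of $\ovS$ handles the stabilized tail; and a degree-by-degree comparison shows $J + \wp = I$, since the only monomials of $I$ not already in $J$ have $x_n$-degree $\geq d_n$ and hence lie in $(x_n^{d_n}) \subseteq \wp$.

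For part (2), I would simply compute the decomposition of the colon ideal: $u x_n^j \in (I : x_n^i)$ if and only if $u x_n^{i+j} \in I$ if and only if $u \in I_{i+j}$, so $(I : x_n^i) = I_i \oplus I_{i+1} x_n \oplus I_{i+2} x_n^2 \oplus \cdots$; adding $(x_n)$ replaces every graded piece of positive $x_n$-degree by $\ovS$ and leaves the bottom piece equal to $I_i$, and passing to the quotient by $(x_n)$ and identifying $S/(x_n) \cong \ovS$ returns $I_i$. (In fact no hypothesis on $I$ beyond being monomial is used here, but the statement is only invoked for $\bfd$-SPP ideals.) For part (3), write $I = L + \wp$ with $L$ lex; the ``monomial in a sum'' remark gives $I_j = L_j + \wp_j$ for every $j$. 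Here $L_j$ is a lex ideal of $\ovS$, because $u >_\lex v$ with $\deg u = \deg v$ forces $u x_n^j >_\lex v x_n^j$ in $S$; and a direct check of which monomials of $\ovS$, multiplied by $x_n^j$, land in $\wp$ shows $\wp_j = \opp$ when $j < d_n$ and $\wp_j = \ovS$ when $j \geq d_n$. Hence for $j < d_n$ we get $I_j = L_j + \opp$, which is $\obfd$-LPP by definition, and for $j \geq d_n$ we get $I_j = \ovS = \ovS + \opp$, which is $\obfd$-LPP since $\ovS$ is (vacuously) a lex ideal.

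None of this presents a genuine obstacle; the proposition really is a reformulation of the definitions. The only point demanding a moment's care is the converse in part (1), where one must produce an explicit $x_n$-stable ideal rather than argue abstractly, precisely because $I$ itself can fail to be $x_n$-stable near $x_n$-degree $d_n$, and the truncate-and-stabilize construction above is the natural fix.
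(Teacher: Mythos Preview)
Your argument is correct and is exactly the kind of direct verification the paper intends: the paper gives no proof at all, stating only that ``their proofs follow immediately from the definitions,'' and what you have written is a careful unpacking of those definitions. One minor notational slip worth cleaning up: in the forward direction of (1), in the case $u x_n^i \in \wp$, you use the letter $\ell$ both for the specific index with $x_\ell^{d_\ell}\mid u x_n^i$ and for the arbitrary index in $\mm_{\ovS}$; the intended argument is that $u$ itself is divisible by some $x_k^{d_k}$ with $k<n$, so $x_\ell u$ is still divisible by $x_k^{d_k}$ for \emph{every} $\ell<n$, giving $x_\ell u \in I_{i-1}$ for all $\ell<n$ as required.
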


We conclude this section by collecting some basic facts about linkage.
Let $I\subsetneq S$ be an unmixed homogeneous ideal, and let $\ff \subsetneq I$ be a complete intersection of the same codimension.
The ideal $ J = \mathfrak{f} : I$ is said to be \textbf{directly linked} to $I$ via $\ff$;
$J$ is  an unmixed homogeneous ideal of the same codimension as $I$,  and $\mathfrak{f} \subsetneq J$.
This operation is a duality in the sense that $ I =   \mathfrak{f} : J$.
Now let $\mathfrak{f}= (f_1, \ldots, f_n) \subseteq S$ be an $\mm_S$-primary complete intersection of degree sequence $\bfd=(d_1, \ldots, d_n)$, with $d_n<\infty$.
Then $S/\ff$ is Artinian with socle degree $s=\sum_{i=1}^n d_i -n$, and the Hilbert functions of the linked ideals $I$ and $J = \mathfrak{f} :I$ satisfy
\begin{equation}\label{EqHilbertFunctionLinkage}
\HF\left(\frac{S}{I}; j\right) + \HF\left(\frac{S}{J};s-j\right) = \HF\left(\frac{S}{\ff};j\right)
\end{equation}
for all $j = 0, \ldots, s$, cf. \cite[5.2.19]{M12}.
In particular, the Hilbert function of a direct link $J = \mathfrak{f}:I$ of an $\mm_S$-primary ideal $I$ depends only on $\HF(I)$ and on the degree sequence of $\mathfrak{f}$.

\section{Proof of the main result}\label{SectionProof}

We begin this section by investigating the behavior of lex-plus-powers ideals under linkage.
We show that taking a direct link of an LPP ideal via the regular sequence of pure powers yields another LPP ideal. 
This fact is already known;  it is proved in \cite{RS08}.
Here we include a shorter alternative proof,
which relies on the following recursive characterization of LPP ideals.

\begin{remark}[{\cite[Proof of Theorem 3.3, Lemma 3.7, Lemma 3.8]{CK13}}]\label{RemarkCharacterizationLPP}
An ideal $L \subseteq S$ is $\bfd$-LPP if and only if the following conditions hold:

\begin{itemize}
\item[(i)] $L$ is $\bfd$-SPP;
\item[(ii)] $L_i \subseteq \ovS$ is $\obfd$-LPP for all $i$;
\item[(iii)] if $I\subseteq S$ is another $\bfd$-SPP ideal with $\HF(I)=\HF(L)$, then   for  all $i,p \geq 0$
$$\sum_{j=0}^i\HF\left(I_j; p-j\right) \geq  \sum_{j=0}^i\HF\left(L_j; p-j\right).$$
\end{itemize}
\end{remark}

\begin{prop}\label{PropositionLinkageMonomial}
Let $\bfd$ be a degree sequence with $d_n < \infty$. 
If  $I$ is a monomial ideal with $\wp  \subsetneq I \subsetneq S$ and $J = \wp :I$ then
\begin{enumerate}
\item $J_i = \opp : I_{d_n-i-1} \subseteq \ovS$ for each  $i = 0, \ldots, d_n-1$;

\item  $I$ is $\bfd$-SPP if and only if $J$ is $\bfd$-SPP;

\item  $I$ is  $\bfd$-LPP  if and only if $J$ is $\bfd$-LPP.

\end{enumerate}

\end{prop}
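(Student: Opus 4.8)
The plan is to prove the three statements simultaneously, with part (1) serving as the computational backbone and parts (2) and (3) following from it via the recursive characterizations available. I would start from the observation that, since $\wp$ is a monomial complete intersection and $I$ is a monomial ideal, the colon $J = \wp : I$ is again monomial and can be computed degree by degree in the $x_n$-grading. Writing $I = \bigoplus_i I_i x_n^i$ and $\wp = \opp S + (x_n^{d_n})$, a monomial $m = m' x_n^i$ with $m' \in \ovS$ and $0 \le i < d_n$ lies in $J$ precisely when $m' x_n^i \cdot I \subseteq \wp$. Testing this against the generators $x_n^k$-components of $I$, one finds that the binding constraint comes from the top nonzero layer: $m' x_n^i \cdot (I_{d_n - 1 - i'} x_n^{\,d_n-1-i'})$ must land in $\wp$ for the relevant shifts, and since multiplying by $x_n$ enough times kills everything via $x_n^{d_n}$, the only real condition is $m' \cdot I_{d_n - 1 - i} \subseteq \opp$ in $\ovS$. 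This gives exactly $J_i = \opp : I_{d_n - 1 - i}$, which is part (1). I expect the bookkeeping of which layer of $I$ produces the tightest condition on the $i$-th layer of $J$ to be the one genuinely delicate point — one has to check that the contributions from layers $I_k$ with $k \ne d_n - 1 - i$ are automatically absorbed, using both $x_n^{d_n} \in \wp$ and the chain $I_{k} \subseteq I_{k+1}$.

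For part (2), I would use Proposition \ref{PropositionBasicPropertiesDecomposition}(1): $I$ is $\bfd$-SPP iff $\wp \subseteq I$ and $\mm_\ovS I_i \subseteq I_{i-1}$ for $0 < i < d_n$. Given part (1), the condition $\mm_\ovS J_i \subseteq J_{i-1}$ translates to $\mm_\ovS (\opp : I_{d_n-1-i}) \subseteq \opp : I_{d_n - i}$, i.e. $\mm_\ovS I_{d_n - i} (\opp : I_{d_n-1-i}) \subseteq \opp$; since $\mm_\ovS I_{d_n-i} \subseteq I_{d_n-1-i}$ by hypothesis on $I$, this is immediate. Reindexing, the chain condition for $J$ is equivalent to the chain condition for $I$, and $\wp \subseteq J$ holds automatically because $J = \wp : I \supseteq \wp : S = \wp$ when $I \subsetneq S$ forces... more carefully, $\wp \subseteq J$ follows from $\wp \cdot I \subseteq \wp \subseteq \wp$, trivially. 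By the duality $I = \wp : J$ the implication runs both ways, so (2) follows.

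For part (3), I would invoke Remark \ref{RemarkCharacterizationLPP}. Suppose $I$ is $\bfd$-LPP. Condition (i) for $J$ is part (2). For condition (ii), each $I_{d_n-1-i}$ is $\obfd$-LPP by Proposition \ref{PropositionBasicPropertiesDecomposition}(3), and a direct link of a $\obfd$-LPP ideal via $\opp$ in $\ovS$ is again $\obfd$-LPP — this is the statement of the proposition one dimension down, so it is available by induction on $n$ (the base case $n=1$ being trivial, as every ideal of $\Bbbk[x_1]$ is lex). Condition (iii) is the real content: given another $\bfd$-SPP ideal $I'$ with $\HF(I') = \HF(J)$, set $J' = \wp : I'$; by (2) and the Hilbert function formula for linkage \eqref{EqHilbertFunctionLinkage} applied layer-wise, $J'$ is $\bfd$-SPP with $\HF(J') = \HF(I)$. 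Comparing partial sums $\sum_{j=0}^i \HF(J'_j; p-j)$ against $\sum_{j=0}^i \HF(J_j; p-j)$ and using part (1) to rewrite each $\HF(J_j; \cdot)$ as a Hilbert function of a colon in $\ovS$, one translates the desired inequality into the corresponding inequality for $I$ versus $I'$ — up to the reversal $j \mapsto d_n - 1 - j$ of the layer index and a complementation coming from \eqref{EqHilbertFunctionLinkage} in $\ovS$. The inequality (iii) that holds for $I$ (since $I$ is $\bfd$-LPP) then yields (iii) for $J$. The main obstacle here is handling the index reversal correctly: a statement about an \emph{initial} segment $\sum_{j=0}^i$ of layers of $J$ becomes a statement about a \emph{terminal} segment of layers of $I$, and one must use the full-sum identity $\sum_{j=0}^{d_n-1} \HF(I_j; \cdot)$ being fixed by the Hilbert function together with the EGH/Clements–Lindström-type complementation in $\ovS$ to convert a terminal-segment inequality back into an initial-segment one. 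Once the combinatorics of this reversal is set up cleanly, the proof closes, and the converse direction again comes for free from the duality $I = \wp : J$.
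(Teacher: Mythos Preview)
Your proposal is correct and follows essentially the same route as the paper: part (1) via the $x_n$-layer decomposition, part (2) by translating the chain condition $\mm_\ovS I_i \subseteq I_{i-1}$ through the colon, and part (3) by induction on $n$ using the recursive characterization in Remark \ref{RemarkCharacterizationLPP}, with the index reversal handled exactly as you describe---converting a terminal-segment inequality for $I$ into an initial-segment one via the fixed total sum. The only minor variation is in part (1): you compute $J_i$ directly by monomial testing (using the chain $I_k \subseteq I_{k+1}$ to isolate the binding layer), whereas the paper establishes only the containment $\opp : I_i \subseteq J_{d_n-1-i}$ and then invokes \eqref{EqHilbertFunctionLinkage} to force equality of Hilbert functions and hence of the ideals.
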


\begin{proof}
Recall that $J$ is monomial with $\wp \subsetneq J \subsetneq S$, and we have decompositions
\begin{eqnarray*}
I  &=& I_0 \oplus I_1 x_n \oplus \cdots I_{d_n-1} x_n^{d_n -1} \oplus \ovS x_n^{d_n} \oplus \ovS x_n^{d_n +1} \oplus \cdots,\\
J  &=& J_0 \oplus J_1 x_n \oplus \cdots J_{d_n-1} x_n^{d_n -1} \oplus \ovS x_n^{d_n} \oplus \ovS x_n^{d_n +1} \oplus \cdots.
\end{eqnarray*}

(1) For each $i = 0, \ldots, d_n-1$ we have  $\opp: I_i \subseteq J_{d_n -1-i}$,
because 
\begin{eqnarray*}
(\opp: I_i) x_n^{d_n-1-i}(I_j x_n^j) \subseteq (\opp: I_i) x_n^{d_n-1-i}(I_i x_n^j) \subseteq\opp S\subseteq \wp &\mbox{ for } j \leq i,\\
(\opp: I_i) x_n^{d_n-1-i}(I_j x_n^j) \subseteq (x_n^{d_n}) \subseteq \wp  &\mbox{ for }  j > i.
\end{eqnarray*}
 On the other hand, 
applying \eqref{EqHilbertFunctionLinkage} to $I$ and to each $I_i$, 
we see that $\opp: I_i $ and $ J_{d_n -1-i}$ must have the same Hilbert function for every $i = 0, \ldots, d_n-1$, 
therefore equality must hold.

(2) 
If $I$ is $\bfd$-SPP, for each $i = 1,\ldots,d_n-1$ we have $\mm_\ovS I_{d_n - i} \subseteq I_{d_n-i-1}$ by Proposition \ref{PropositionBasicPropertiesDecomposition}. 
Passing to  links via $\opp$ and using part (1) we obtain 
$$
J_{i} = \opp :  I_{d_n-i-1}  \subseteq  \opp :  \mm_\ovS I_{d_n - i} =  (\opp : I_{d_n - i}) : \mm_\ovS = J_{i-1} : \mm_\ovS 
 $$ 
 and hence $\mm_\ovS J_i \subseteq J_{i-1} $. 
 Thus, $J$ is a $\bfd$-SPP ideal as desired.
 
(3) 
Let $I$ be a $\bfd$-LPP ideal.
We proceed by induction on $n$, the case $n=0$ being trivial.
In order to prove that $J$ is $\bfd$-LPP,  we use Remark \ref{RemarkCharacterizationLPP}:
by induction, (i) follows from Proposition \ref{PropositionBasicPropertiesLPP} and part (2), whereas (ii) follows from Proposition \ref{PropositionBasicPropertiesDecomposition} and part (1).
Assume by contradiction that (iii) fails, that is, 
there exist  another $\bfd$-SPP ideal  $J'\subseteq S$ with $\HF(J') = \HF(J)$ and 
values $i,p\geq 0 $ such that
$\sum_{j=0}^i\HF\left(J'_j; p-j\right) <  \sum_{j=0}^i\HF\left(J_j; p-j\right).$
Taking direct links of each $J_j, J'_j$ via $\opp$, equation \eqref{EqHilbertFunctionLinkage} gives
$$\sum_{j=0}^i\HF\left(\opp : J'_j; \overline{s} - p+j\right) >  \sum_{j=0}^i\HF\left(\opp : J_j; \overline{s}-p+j\right)$$
where $\overline{s} = \sum_{j=1}^{n-1} (d_j -1)$ is the socle degree of $\ovS/\opp$.

The ideal $I' = \wp : J' \subseteq S$ is  $\bfd$-SPP  with $\HF(I')= \HF(I)$ and $I'_j = \opp : J'_{d_n-j-1}$ for each $j=0,\ldots, d_n-1$, by parts (1) and (2).
The previous inequality becomes
$
\sum_{j=0}^i\HF(I'_{d_n-j-1}; \overline{s} - p+j) >  \sum_{j=0}^i\HF\left(I_{d_n-_j-1}; \overline{s}-p+j\right).
$
Setting  $q = \overline{s}-p+d_n-1$ and reindexing, we obtain
\begin{equation}\label{EqInequalityLastComponentsVector}
\sum_{j=d_n-i-1}^{d_n-1}\HF\left(I'_j; q-j\right) >  \sum_{j=d_n-i-1}^{d_n-1}\HF\left(I_j; q-j\right).
\end{equation}
Observe that 
$
\sum_{j=0}^{q} \HF(I'_j; q-j)= \HF(I';q)  =\HF(I;q) = \sum_{j=0}^q \HF(I_j; q-j).
$
Furthermore, since $I_j = I'_j = \ovS$ for $j\geq d_n$, we actually have 
$
\sum_{j=0}^{d_n} \HF(I'_j; q-j)=  \sum_{j=0}^{d_n} \HF(I_j; q-j),
$ 
so by \eqref{EqInequalityLastComponentsVector} we deduce
$$
\sum_{j=0}^{d_n-i-2}\HF\left(I'_j; q-j\right) <  \sum_{j=0}^{d_n-i-2}\HF\left(I_j; q-j\right).
$$
However, this  contradicts the fact that $I$ is a $\bfd$-LPP ideal, by Remark \ref{RemarkCharacterizationLPP}.
Thus, $J$ is a $\bfd$-LPP ideal, and the proof is concluded.
\end{proof}

Next we prove an inequality of Hilbert functions between the general hyperplane sections of certain ideals and those of their LPP ideals, in the spirit of Green's Hyperplane Restriction Theorem \cite{G89} and its generalizations \cite{HP98,G99}.
Our statement only holds if the ground field has characteristic 0.

\begin{lemma}\label{LemmaHyperplaneSectionInequalityExtension}
Assume  $\ch(\Bbbk)=0$. 
Let $\{f_1, \ldots, f_{n-1}\} \subseteq S$ be a regular sequence with  degree sequence $\bfd$, where  $d_n = \infty$,
and $I \subseteq S$  a homogeneous ideal containing $\{f_1, \ldots, f_{n-1} \}$.
Let  $\ell \in [S]_1$ be a general linear form and 
denote by $\overline{f}_i$ the image of $f_i$ in  $ S/(\ell)\cong \ovS$.
If Conjecture \ref{ConjectureEGH} holds for the regular sequence $\{\overline{f}_1, \ldots, \overline{f}_{n-1} \} $
then for all $i \geq 1$ we have 
 $$ \HF\big(I + (\ell^i)\big) \succeq \HF\big(\LL^\bfd(I)+(x_n^i)\big).$$
\end{lemma}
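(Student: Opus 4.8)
The plan is to reduce the statement to the known Eisenbud--Green--Harris Conjecture in $\ovS$ together with a Green-type hyperplane restriction inequality, and to argue by a double induction: on the number of variables $n$ and (for fixed $n$) on the exponent $i$. First I would dispose of the trivial base cases ($n$ small, or $i$ such that $\ell^i$ already lies in $I$ after passing to general coordinates). For the inductive step I would analyze the short exact sequence
$$
0 \longrightarrow \frac{S}{\big(I+(\ell^{i})\big):\ell}(-1) \xrightarrow{\ \cdot\ell\ } \frac{S}{I+(\ell^{i})} \longrightarrow \frac{S}{I+(\ell)} \longrightarrow 0,
$$
which on Hilbert functions reads $\HF\big(S/(I+(\ell^{i}))\big) = \HF\big(S/(I+(\ell))\big) + \HF\big(S/((I+(\ell^{i})):\ell)\big)(-1)$. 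The colon ideal $(I+(\ell^{i})):\ell$ contains $I+(\ell^{i-1})$, and for general $\ell$ one expects equality of Hilbert functions there, so the middle term decomposes into a ``degree-$1$ restriction piece'' $I+(\ell)$ and a ``shifted lower-power piece'' $I+(\ell^{i-1})$. The analogous identity holds on the LPP side, since for the $\bfd$-LPP ideal $L=\LL^\bfd(I)$ with $d_n=\infty$ one has $\big(L+(x_n^{i})\big):x_n = L+(x_n^{i-1})$ by $x_n$-stability. Thus it suffices to compare the two summands separately.

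For the shifted piece, $\HF\big(I+(\ell^{i-1})\big)\succeq \HF\big(L+(x_n^{i-1})\big)$ is exactly the inductive hypothesis on $i$. The real content is the degree-one piece: one must show
$$
\HF\big(I+(\ell)\big)\ \succeq\ \HF\big(L+(x_n)\big),
$$
i.e. that the general hyperplane section of $S/I$ has Hilbert function bounded below by that of $S/L$ restricted mod $x_n$. Here I would use that $S/(I+(\ell))\cong \ovS/\overline{I}$ where $\overline{I}$ is the image of $I$, which contains the regular sequence $\{\overline{f}_1,\dots,\overline{f}_{n-1}\}$; since Conjecture \ref{ConjectureEGH} is assumed for this sequence in $\ovS$, there is a $\obfd$-LPP ideal $\LL^{\obfd}(\overline{I})$ with the same Hilbert function as $\overline{I}$. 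It then remains to compare $\HF(\LL^{\obfd}(\overline{I}))$ with $\HF\big((L+(x_n))/(x_n)\big)=\HF(L_0)$, where $L_0\subseteq\ovS$ is the bottom piece of the decomposition of $L$. Both are $\obfd$-LPP ideals in $\ovS$, so by Proposition \ref{PropositionBasicPropertiesLPP}(2) it is enough to show $\HF(\overline{I})\preceq \HF(L_0)$, equivalently $\HF(\ovS/\overline{I})\succeq \HF(\ovS/L_0)$. This is precisely where characteristic $0$ enters, via a Green-style hyperplane restriction argument: $\HF(\ovS/\overline{I};j)$ is bounded below by the ``$j$-th difference'' prescribed by Macaulay-type growth applied to $\HF(S/I)$, and one must check that $L_0$, being the component of the lex-plus-powers ideal, realizes exactly the extremal such restriction. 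Combining the bound on both summands through the exact-sequence identity yields $\HF\big(I+(\ell^{i})\big)\succeq\HF\big(L+(x_n^{i})\big)$, completing the induction.

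\textbf{Main obstacle.}
The delicate point is the comparison $\HF(\overline{I})\preceq \HF(L_0)$ of the degree-one hyperplane sections. One inequality, $\HF(\overline{I})\preceq \HF\big(\LL^{\obfd}(\overline{I})\big)$, is immediate from the EGH hypothesis in $\ovS$; the genuine work is showing that the lex-plus-powers ideal $L$ of $I$ in $S$ has $L_0$ at least as large as $\LL^{\obfd}(\overline{I})$. This requires a quantitative Green-type estimate: passing from $I$ to its general hyperplane section $\overline I$ cannot decrease the Hilbert function by more than the lex-segment (here lex-plus-powers) model predicts, and this is exactly the statement that fails in positive characteristic, where Fröberg--Hollman-type counterexamples to Green's theorem for restricted monomial ideals arise. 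I expect the heart of the proof to be a careful induction establishing this extremal restriction property for $\bfd$-LPP ideals, using the recursive characterization in Remark \ref{RemarkCharacterizationLPP} and the decomposition $L=L_0\oplus L_1 x_n\oplus\cdots$ of Proposition \ref{PropositionBasicPropertiesDecomposition}, together with the characteristic-$0$ version of Green's inequality for the sequence $\{\overline{f}_i\}$.
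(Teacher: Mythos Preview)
Your reduction via induction on $i$ is a different organization from the paper's and the skeleton is sound: the short exact sequence splits $\HF\big(S/(I+(\ell^i))\big)$ into the $i=1$ piece plus a shifted $\HF\big(S/((I+(\ell^i)):\ell)\big)$, and on the LPP side $x_n$-stability (with $d_n=\infty$) indeed gives $(L+(x_n^i)):x_n = L+(x_n^{i-1})$. Note that you do not need the ``expected equality'' $\HF\big((I+(\ell^i)):\ell\big)=\HF\big(I+(\ell^{i-1})\big)$: the containment $(I+(\ell^i)):\ell \supseteq I+(\ell^{i-1})$ already yields $\HF\big(S/((I+(\ell^i)):\ell)\big)\preceq\HF\big(S/(I+(\ell^{i-1}))\big)$, which is the direction required for the inductive step. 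So the whole lemma does reduce cleanly to the case $i=1$.

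There is a genuine gap in your treatment of $i=1$, however. First, the inequality is reversed throughout that paragraph: from $\HF(I+(\ell))\succeq\HF(L+(x_n))$ one needs $\HF(\ovS/\overline I)\preceq\HF(\ovS/L_0)$, i.e.\ $\HF(\overline I)\succeq\HF(L_0)$, not $\preceq$; correspondingly, Green's restriction theorem gives an \emph{upper} bound on $\HF(\ovS/\overline I)$. Second, and more substantively, knowing that $\LL^{\obfd}(\overline I)$ exists (via EGH in $\ovS$) does not by itself compare it to $L_0$; you still have to prove the Green-type extremality statement that the LPP ideal has the smallest possible hyperplane section among all ideals with the same Hilbert function containing a $\bfd$-complete intersection. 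Your closing sketch does not indicate how to obtain this from Remark~\ref{RemarkCharacterizationLPP} alone.

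The paper does not isolate $i=1$. Instead it passes to the initial ideal $K=\initial_\omega(I)$ for the weight $\omega=(1,\dots,1,0)$, so that $K=\bigoplus_j K_j x_n^j$ with $\HF(K+(x_n^i))=\HF(I+(x_n^i))$ for all $i$ simultaneously. The characteristic-zero input is \cite[Prop.~2.17]{G98}, which in general coordinates gives $\mm_\ovS K_j\subseteq K_{j-1}$. Replacing each $K_j$ by $H_j=\LL^{\obfd}(K_j)$ (using the EGH hypothesis in $\ovS$) produces a $\bfd$-SPP ideal $H$ with $\HF(H+(x_n^i))=\HF(I+(x_n^i))$ for every $i$, and then the comparison $\HF(H+(x_n^i))\succeq\HF(\LL^\bfd(H)+(x_n^i))$ follows from \cite[Thm.~3.9]{CK13}, which is the poset-embedding form of the extremal property in Remark~\ref{RemarkCharacterizationLPP}(iii). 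To complete your argument for $i=1$ you would need to reproduce essentially this SPP construction.
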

\begin{proof}
Up to applying a general change of coordinates, 
we may already assume that the coordinates $x_1, \ldots, x_n$ are general and that $\ell = x_n$.
Note that the assumptions are preserved by changes of coordinates.

Let $K = \initial_{\omega}(I)$ be the  initial ideal with respect to the weight $\omega=(1,1,\ldots1, 0)\in \mathbb{N}^n$.
Then $K$ is bihomogeneous by letting  $\bideg(x_i)=(1,0)$ for $i< n$ and $\bideg(x_n)=(0,1)$, and we may decompose
$
K = K_0 \oplus K_1 x_n \oplus K_2 x_n^2 \oplus \cdots 
$
where $K_j \subseteq \ovS$ is a homogeneous ideal containing $\{\overline{f}_1, \ldots, \overline{f}_{n-1} \} $.
By construction we have $\initial_{\omega}(I+ (x_n^i)) = K + (x_n^i)$  for all $i\geq 1$.
Since the coordinates are general and $\ch(\Bbbk)=0$ we have  $\mm_\ovS K_j \subseteq K_{j-1}$ for all $j\geq 1$ by 
\cite[Proposition 2.17]{G98}.

By assumption the ideal  $H_j = \LL^\obfd(K_j) \subseteq \ovS$ is well defined for every $j$.
Consider the graded $\ovS$-module
$
H = H_0 \oplus H_1 x_n \oplus H_2 x_n^2 \oplus \cdots,
$
by construction we have $\HF(H) = \HF(K)$ and in fact $\HF(H + (x_n^i)) = \HF(K+ (x_n^i))$ for all $i \geq 1$.
For each $j \geq 1$ the containment 
 $K_{j-1} \subseteq K_{j}$ implies $\HF(H_{j-1}) = \HF(K_{j-1}) \preceq \HF(K_j) = \HF(H_j) $ and hence  $H_{j-1} \subseteq H_{j}$ by Proposition \ref{PropositionBasicPropertiesLPP}, 
 so $H$ is an ideal of $S$.
Moreover, the containment
 $ \mm_\ovS  K_{j} \subseteq K_{j-1}$ 
 implies $\HF(\mm_\ovS K_{j}) \preceq \HF(K_{j-1}) = \HF(H_{j-1})$.
Since Conjecture \ref{ConjectureEGH} is true for the regular sequence $\{\overline{f}_1, \ldots, \overline{f}_{n-1} \} $,
 the inequality $  \HF(\mm_\ovS H_{j})  \preceq \HF(\mm_\ovS K_{j})$ holds; 
in fact, this is a well-known consequence of  Conjecture \ref{ConjectureEGH}, see e.g. \cite[Lemma 2.4]{CS16}.
We deduce  $ \HF(\mm_\ovS H_{j}) \preceq  \HF(H_{j-1})$, and by Proposition \ref{PropositionBasicPropertiesLPP} it follows that   $\mm_\ovS H_j \subseteq H_{j-1}$, thus  $ H$ is a $\bfd$-SPP ideal.

To summarize, we have $\HF(I + (x_n^i)) = \HF(K + (x_n^i)) = \HF(H + (x_n^i))$ for all $i\geq 1$, 
and $H$ is a $\bfd$-SPP ideal of $S$ with $\HF(I) = \HF(H)$, so in particular $\LL^\bfd(I) = \LL^\bfd(H)$.
Now we may apply \cite[Theorem 3.9]{CK13}: in the language of that paper, 
the ring $\frac{S}{\wp}  $ has the embedding $ \frac{\mathcal{I}}{\wp} \mapsto \frac{\LL^\bfd(\mathcal{I})}{\wp}$,
and this is precisely the embedding produced by  \cite[Proof of Theorem 3.3]{CK13} starting from the embedding of $\frac{\ovS}{\opp}$ given by $ \frac{\mathcal{J}}{\opp} \mapsto \frac{\LL^\obfd(\mathcal{J})}{\opp}$.
We obtain that $\HF(H + (x_n^i)) \succeq \HF( \LL^\bfd(H) + (x_n^i))$, 
and the desired conclusion follows.
\end{proof}

\begin{example}
Let $\Bbbk$ be a field with $\ch(\Bbbk)=p>0$ and $S = \Bbbk[x_1,x_2,x_3]$.
Let  $I = (x_1^{2p},x_1^px_2^p,x_2^{2p},x_1^px_3^p, x_2^px_3^p)\subseteq S$;
choosing degree sequence $\bfd=(2p,2p,\infty)$, the lex-plus-powers ideal is $\LL^\bfd(I)=(x_1^{2p}, x_1^{2p-1}x_2, x_1^{2p-1}x_3, x_1^{2p-2}x_2^2,x_2^{2p} ) + K$ 
for some monomial ideal $K$ generated in degrees $\geq 2p+1$.
For any $\ell \in [S]_1$ with non-zero coefficient in $x_3$
the image of $[I]_{2p}$ modulo $(\ell)$ is a 3-dimensional vector space,
whereas the image of $[\LL^\bfd(I)]_{2p}$ modulo $(x_3)$ is 4-dimensional.
Thus
 $ \HF(I + (\ell); 2p) < \HF(\LL^\bfd(I)+(x_3);2p)$ and
 the conclusion of Lemma \ref{LemmaHyperplaneSectionInequalityExtension} is false.
\end{example}

We recall  the following well-known fact about graded Betti numbers.

\begin{remark}\label{RemarkShortExactSequenceBetti}
Given a  standard graded $\Bbbk$-algebra $R$ and a short exact sequence of finitely generated graded $R$-modules
$
0 \rightarrow M_1 \rightarrow M_2 \rightarrow M_3 \rightarrow 0,
$
we have $\beta_{i,j}^R(M_2) \leq \beta_{i,j}^R(M_1)+ \beta_{i,j}^R(M_3)$ for all $i,j$.
\end{remark}

We introduce a notation for the ``underlying graded vector space'' of a graded module of finite length; 
these objects will be useful when estimating Betti numbers over $\ovS$ in the proof of the main theorem.

\begin{notation}\label{NotationUnderlyingVectorSpace}
Let $R$ be a standard graded $\Bbbk$-algebra and $M$ a graded $R$-module with $\ell_R(M)<\infty$.
We consider the graded  $R$-module 
$$
V(M)= \bigoplus_{j\in\mathbb{Z}}\Bbbk(-j)^{\HF(M;j)}.
$$ 
It has the same Hilbert function as $M$ and it is annihilated by $\mm_R$.
By induction on $\ell_R(M)$ and Remark \ref{RemarkShortExactSequenceBetti} we see that $\beta_{i,j}^R(M) \leq \beta_{i,j}^R(V(M))  $ for all $i,j$.
Furthermore, the Betti numbers of $V(M)$ are uniquely determined by $\HF(M)$.
\end{notation}

\begin{lemma}\label{LemmaBettiHyperplaneSectionStable}
Let $I \subseteq S$ be a $\bfd$-SPP ideal, where $\bfd$ is a degree sequence with  $d_n < \infty$.
Then for all $i,j\geq 0$ we have
$$
\beta_{i,j}^\ovS\left(\frac{I}{x_nI}\right)  = \beta_{i,j}^\ovS(I_0) +\beta_{i,j}^\ovS\left(V\left(\bigoplus_{h=1}^{d_n-1} \frac{I_h}{I_{h-1}}(-h)\right)\right)  + \beta_{i,j}^\ovS\left(\frac{\ovS}{I_{d_n-1}}(-d_n)\right).
$$
\end{lemma}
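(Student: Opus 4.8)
The plan is to analyze the $\ovS$-module $I/x_nI$ through the $x_n$-adic decomposition $I = I_0 \oplus I_1x_n \oplus \cdots$. First I would observe that multiplication by $x_n$ sends the component $I_{h-1}x_n^{h-1}$ isomorphically onto a submodule of $I_hx_n^h$; concretely, $x_nI = I_0 x_n \oplus I_1 x_n^2 \oplus \cdots$, so that as graded $\ovS$-modules
$$
\frac{I}{x_nI} \;\cong\; I_0 \;\oplus\; \frac{I_1}{I_0}(-1) \;\oplus\; \frac{I_2}{I_1}(-2) \;\oplus\; \cdots \;\oplus\; \frac{I_{d_n-1}}{I_{d_n-2}}(-(d_n-1)) \;\oplus\; \frac{\ovS}{I_{d_n-1}}(-d_n),
$$
where the last summand arises because $I_h = \ovS$ for all $h\geq d_n$, so the tail $\bigoplus_{h\geq d_n}\ovS x_n^h$ contributes $x_n^{d_n}\cdot(\ovS/I_{d_n-1})$ after quotienting by $x_n$. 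This is a direct-sum decomposition of $\ovS$-modules, hence Betti numbers add: $\beta_{i,j}^\ovS(I/x_nI)$ equals $\beta_{i,j}^\ovS(I_0)$ plus $\beta_{i,j}^\ovS(\ovS/I_{d_n-1}(-d_n))$ plus $\sum_{h=1}^{d_n-1}\beta_{i,j}^\ovS\big(\frac{I_h}{I_{h-1}}(-h)\big)$.

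The remaining point is to replace the middle sum $\sum_{h=1}^{d_n-1}\beta_{i,j}^\ovS\big(\frac{I_h}{I_{h-1}}(-h)\big)$ with $\beta_{i,j}^\ovS\big(V\big(\bigoplus_{h=1}^{d_n-1}\frac{I_h}{I_{h-1}}(-h)\big)\big)$, i.e.\ to show that for each $h$ the quotient $I_h/I_{h-1}$ \emph{is} annihilated by $\mm_\ovS$, so that it already coincides with its own $V(-)$ and no inequality is lost. This is exactly where the $\bfd$-SPP hypothesis enters: by Proposition \ref{PropositionBasicPropertiesDecomposition}(1), $\mm_\ovS I_h \subseteq I_{h-1}$ for all $0 < h < d_n$, which says precisely that $\mm_\ovS$ kills $I_h/I_{h-1}$. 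Therefore each $\frac{I_h}{I_{h-1}}(-h)$ is a graded $\Bbbk$-vector space (with trivial $\ovS$-action), the finite direct sum $\bigoplus_{h=1}^{d_n-1}\frac{I_h}{I_{h-1}}(-h)$ is again $\mm_\ovS$-annihilated of finite length, and it equals $V$ of itself; thus the Betti numbers agree on the nose, $\beta_{i,j}^\ovS\big(\bigoplus_{h=1}^{d_n-1}\frac{I_h}{I_{h-1}}(-h)\big) = \beta_{i,j}^\ovS\big(V\big(\bigoplus_{h=1}^{d_n-1}\frac{I_h}{I_{h-1}}(-h)\big)\big)$, and additivity of Betti numbers over a direct sum closes the argument.

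I do not anticipate a genuine obstacle here — the statement is essentially a bookkeeping identity — but the step requiring the most care is the identification of the quotient $I/x_nI$ with the stated direct sum, in particular getting the grading twists right and correctly handling the ``tail'' $\bigoplus_{h\geq d_n}\ovS x_n^h$, whose image in $I/x_nI$ is the single shifted copy $\frac{\ovS}{I_{d_n-1}}(-d_n)$ rather than infinitely many copies of $\ovS$. One should check that the natural surjection from $I_hx_n^h \oplus I_{h+1}x_n^{h+1}\oplus\cdots$ restricted modulo $x_nI$ respects the claimed splitting; since all the maps involved are monomial and $\ovS$-linear, this is routine but worth stating carefully. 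The invocation of Notation \ref{NotationUnderlyingVectorSpace} is then immediate, and in fact here the generic inequality $\beta_{i,j}^\ovS(M)\le\beta_{i,j}^\ovS(V(M))$ is an equality because $M$ is already a trivial module.
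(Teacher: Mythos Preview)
Your proposal is correct and follows essentially the same approach as the paper's proof: decompose $I/x_nI$ via the $x_n$-adic filtration and use the $\bfd$-SPP condition (equivalently, Proposition~\ref{PropositionBasicPropertiesDecomposition}(1) or Definition~\ref{DefinitionLPPandSPP}) to see that each $I_h/I_{h-1}$ is $\mm_\ovS$-annihilated, so the middle block is already its own $V(-)$. The paper states this in a single sentence; you have simply spelled out the details, including the handling of the tail, correctly.
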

\begin{proof}
This follows immediately from the decomposition of graded $\ovS$-modules
$ I = I_0 \oplus I_1 x_n \oplus \cdots \oplus I_{d_n-1} x_n^{d_n-1} \oplus \ovS x_n^{d_n} \oplus \cdots$
and the fact that $\frac{I_h}{I_{h-1}}$ is already annihilated by $\mm_\ovS$  for $1 \leq i \leq d_n-1$ by Definition \ref{DefinitionLPPandSPP}.
\end{proof}

We are now ready to prove the main result of this paper.

\begin{thm}\label{TheoremMainInBodyPaper}
Let $I\subseteq S=\Bbbk[x_1,\ldots, x_n]$ be a homogeneous ideal containing a complete intersection of degree sequence $\bfd$, such that $d_i \geq \sum_{j=1}^{i-1}(d_j-1)+1$ for all $i\geq 3$.
Assume $\ch(\Bbbk)=0$.
Then $\beta_{i,j}^S(I) \leq \beta_{i,j}^S\big(\LL^\bfd(I)\big)$ for all $i,j\geq 0$.
\end{thm}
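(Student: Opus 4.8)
The plan is to proceed by induction on the number of variables $n$, after two standard reductions. First, by Proposition \ref{PropositionReductionToArtinianCavigliaKummini} we may pass to $n-c$ general linear forms and assume the complete intersection is $\mm_S$-primary, i.e. $d_n < \infty$; since Conjecture \ref{ConjectureEGH} holds in our range of degrees by Proposition \ref{PropositionEGHCavigliaMaclagan}, the ideal $\LL^\bfd(I)$ exists. Second, by a Gr\"obner deformation with respect to the weight $\omega = (1,\ldots,1,0)$ — exactly as in Lemma \ref{LemmaHyperplaneSectionInequalityExtension}, using \cite[Proposition 2.17]{G98} in characteristic $0$ to ensure the $x_n$-stability condition $\mm_\ovS K_j \subseteq K_{j-1}$ passes to the components of $K = \initial_\omega(I)$ — we may replace $I$ by a $\bfd$-SPP ideal with the same Hilbert function (hence the same LPP ideal) and with Betti numbers at least as large as those of $I$. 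So it suffices to prove the theorem for a $\bfd$-SPP ideal $I$.

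Having reduced to $I$ a $\bfd$-SPP ideal, I would compare the Betti numbers of $I$ and $L := \LL^\bfd(I)$ over $S = \ovS[x_n]$ by studying the hyperplane section $S/(x_n)$. The key tool is the long exact sequence of $\Tor$ coming from $0 \to I/x_nI \xrightarrow{\cdot x_n} I \to I/x_nI(1) \to 0$ — wait, more precisely one uses the standard relationship $\beta_{i,j}^S(I) \le \beta_{i,j}^\ovS(I/x_nI)$ when $x_n$ is "almost regular," or one tracks the mapping cone; in any case the problem is transferred to comparing $\beta_{i,j}^\ovS(I/x_nI)$ with $\beta_{i,j}^\ovS(L/x_nL)$. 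Now Lemma \ref{LemmaBettiHyperplaneSectionStable} expresses each of these as a sum of three terms: the Betti numbers of $I_0$ over $\ovS$, the Betti numbers of the vector space $V\big(\bigoplus_{h=1}^{d_n-1} \frac{I_h}{I_{h-1}}(-h)\big)$, and the Betti numbers of $\frac{\ovS}{I_{d_n-1}}(-d_n)$. For the first term, $I_0 = \LL^\obfd$-?\,$(I_0)$ would need $I_0$'s LPP ideal to dominate it in Betti numbers — this is the inductive hypothesis over $\ovS$, using that $I_0$ contains the appropriate truncated complete intersection and that $L_0 = \LL^{\obfd}(I_0)$ by Proposition \ref{PropositionBasicPropertiesDecomposition}(3) together with an EGH-type Hilbert-function comparison. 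The third term is handled similarly by induction applied to $I_{d_n-1}$, whose LPP ideal is $L_{d_n-1}$; here one uses linkage (Proposition \ref{PropositionLinkageMonomial}) to identify $\ovS/I_{d_n-1}$ with a link and to match Hilbert functions. For the middle term, which depends only on the Hilbert functions $\HF(I_h/I_{h-1})$, the point is that the dominance $\sum_{j \le i} \HF(L_j;p-j) \le \sum_{j\le i}\HF(I_j;p-j)$ from Remark \ref{RemarkCharacterizationLPP}(iii) — equivalently the partial sums for $I$ are term-by-term dominated by those for $L$ — forces the successive quotients $I_h/I_{h-1}$ to be "less spread out" than $L_h/L_{h-1}$ in a way that increases Betti numbers of the associated vector space; this is where Lemma \ref{LemmaHyperplaneSectionInequalityExtension} enters, since $\HF(I + (x_n^i))$ governs precisely $\sum_{h < i}\HF(I_h) + (d_n - i)\cdot(\text{stuff})$, and the lemma gives $\HF(I+(x_n^i)) \succeq \HF(L+(x_n^i))$, the Green-type hyperplane restriction inequality that pins down the shape of the grading.

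The actual bookkeeping is to verify that, term by term in the decomposition of Lemma \ref{LemmaBettiHyperplaneSectionStable}, the $I$-side is $\le$ the $L$-side, and then to lift this back from $\ovS$ to $S$ without losing the inequality. The lifting step requires care: one wants $\beta_{i,j}^S(I) \le \beta_{i,j}^\ovS(I/x_nI)$ to be an equality or near-equality for $L$ precisely when it is for $I$ — the cleanest route is to observe that $\beta^S_{i,j}(L) = \beta^\ovS_{i,j}(L/x_nL)$ holds for the LPP ideal $L$ (because $x_n$ is essentially a "last variable" for which the Betti numbers of a stable-type ideal are computed by an Eliahou–Kervaire-style splitting with no cancellation), while $\beta^S_{i,j}(I) \le \beta^\ovS_{i,j}(I/x_nI)$ always holds; combining these with the $\ovS$-inequality just established finishes the induction.

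\textbf{Main obstacle.} I expect the crux to be the middle term and its interaction with the lifting: controlling $\beta_{i,j}^\ovS\big(V(\bigoplus_h \frac{I_h}{I_{h-1}}(-h))\big)$ requires that the Hilbert-function partial sums for $I$ be dominated by those for $L$ \emph{shift by shift}, and this is exactly the content that Lemma \ref{LemmaHyperplaneSectionInequalityExtension} (valid only in characteristic $0$) is designed to supply — but translating "partial sums of Hilbert functions are dominated" into "Betti numbers of the underlying graded vector space are dominated" needs a clean monotonicity statement for $\beta^\ovS_{i,j}(V(-))$ under the relevant partial order on Hilbert functions, and checking that the three pieces from Lemma \ref{LemmaBettiHyperplaneSectionStable} can be compared \emph{simultaneously} for $I$ and $L$ (rather than producing three independent, non-aligned bounds) is where the argument is most delicate. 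Keeping the comparison compatible with the linkage identification of the top component $I_{d_n-1}$ is the other place where I would expect to spend real effort.
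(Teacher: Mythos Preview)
Your plan has the right overall architecture (induction on $n$, reduction to $d_n<\infty$, transfer to $\ovS$ via $I/x_nI$, a three-piece decomposition, and linkage for the top component), but the second reduction---replacing $I$ by a $\bfd$-SPP ideal via the weight-$\omega$ initial ideal---is where it breaks. Taking $K=\initial_\omega(I)$ with $\omega=(1,\ldots,1,0)$ does give a bihomogeneous ideal whose components satisfy $\mm_\ovS K_j\subseteq K_{j-1}$ in characteristic $0$, but being $\bfd$-SPP also requires $\wp=(x_1^{d_1},\ldots,x_n^{d_n})\subseteq K$, and there is no reason for this: each $K_j$ contains only the image of the original (non-monomial) regular sequence, not $\opp$. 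Already for $I=(x_1^2,x_2^2)$ in two variables the generic initial ideal is $(x_1^2,x_1x_2,x_2^3)$, which is not $(2,2)$-SPP. Without the SPP property you cannot invoke Lemma~\ref{LemmaBettiHyperplaneSectionStable}, and the monomial components $I_h$ on which you want to run the inductive hypothesis are not even defined. (A related slip: even in the SPP setting one does not have $L_0=\LL^{\obfd}(I_0)$, only $L_0\subseteq\LL^{\obfd}(I_0)$, so the three pieces never match up termwise on Hilbert functions.)

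The paper avoids this entirely: it never reduces $I$ to a monomial or SPP ideal. After only a general change of coordinates making $x_n$ general, it uses the exact equality $\beta^S_{i,j}(I)=\beta^\ovS_{i,j}(I/x_nI)$ (valid because $x_n$ is a non-zerodivisor on the ideal $I\subseteq S$, so your lifting worry is moot) together with the short exact sequence $0\to x_n(I:x_n)/x_nI\to I/x_nI\to\overline{I}\to 0$. The degree hypothesis enters here in a way your plan misses: since $d_n>\sum_{j<n}(d_j-1)$ one may take $f_n=x_n g$ with $\deg g=d_n-1$, and then $S/(I:g)(-d_n)$ embeds in the kernel term. Induction is applied to $\overline{I}$ and to $I_\star=\overline{I:g}$; Lemma~\ref{LemmaHyperplaneSectionInequalityExtension} is applied both to $I$ and to its link $J=\mathfrak{f}:I$ via the \emph{original} regular sequence, and linkage converts the second application into the bound $\HF(I:g)\preceq\HF(L:x_n^{d_n-1})$. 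All leftover pieces are absorbed into a single graded vector space whose Hilbert function is forced, by global Hilbert-function bookkeeping, to equal that of $\bigoplus_h (L_h/L_{h-1})(-h)$---so no separate monotonicity argument for $\beta^\ovS(V(-))$ is needed. The ingredient you are missing is precisely the factorization $f_n=x_n g$ and the linkage computation via $\mathfrak{f}$, which together substitute for the unavailable SPP reduction.
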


\begin{proof}
By Proposition \ref{PropositionReductionToArtinianCavigliaKummini} we may assume that $d_n < \infty$. 
From  Proposition \ref{PropositionEGHCavigliaMaclagan} we know that Conjecture \ref{ConjectureEGH} holds for $\bfd$ in $S$ and $\obfd$ in $\ovS$.
We prove the theorem by induction on $n$, the cases $n=0,1$ being trivial.

Let $\mathfrak{f}=(f_1, \ldots, f_n)\subseteq I$ be the given complete intersection, with $\deg(f_i)=d_i$.
We consider the $S$-ideals $J = \mathfrak{f}: I$,  $L= \LL^\bfd(I) $, and $K = \LL^\bfd(J)$.
Notice that $L$ and $K$ are well-defined, and they are directly linked via $\wp$ by Proposition \ref{PropositionLinkageMonomial}.

By changing coordinates, we may assume that $x_n$ is a general linear form.
Then the ideal $(f_1, \ldots, f_{n-1}, x_n)$ is an $\mm_S$-primary complete intersection with socle degree $\sum_{j=1}^{n-1} (d_j -1)$.
By assumption $d_n > \sum_{j=1}^{n-1} (d_j -1)$ and thus $f_n \in (f_1, \ldots, f_{n-1}, x_n)$,
therefore, up to replacing the form $f_n$, we may assume that $f_n = x_n g$ for some  form $g \in [S]_{d_n-1}$.

Since $x_n$ is a linear non-zerodivisor in $S$, for all $i,j\geq 0$ we have 
\begin{equation*}
\beta_{ij}^S(I) = \beta_{ij}^{\ovS}\left(\frac{I}{x_n I}\right)\qquad \mbox{and}\qquad \beta_{ij}^S(L) = \beta_{ij}^{\ovS}\left(\frac{L}{x_n L}\right)
\end{equation*}
thus it will suffice to show 
$ \beta_{ij}^{\ovS}\left(\frac{I}{x_n I}\right)\leq \beta_{ij}^{\ovS}\left(\frac{L}{x_n L}\right)$
for every $i, j \geq 0$.

There is a short exact sequence of graded $\ovS$-modules
\begin{equation}\label{EqShortExactSequence}
0 \longrightarrow 	 \frac{x_n(I:x_n)}{x_nI} \longrightarrow \frac{I }{x_n I} \longrightarrow \frac{I+(x_n)}{(x_n)}  \longrightarrow 0.
\end{equation}

The  $\ovS$-ideal $  \overline{I}  =  \frac{I+(x_n)}{(x_n)} $ contains $\frac{(f_1, \ldots, f_{n-1},x_n)}{(x_n)}$, which is a complete intersection in  $\ovS$ of degree sequence $\obfd$.
By induction, $\beta_{i,j}^{\ovS}(\overline{I}) \leq \beta_{i,j}^\ovS (\LL^{\overline{\bfd}}(\overline{I}))$ for all $i,j$.
Since both $S$-ideals $I + (x_n)$ and $L+(x_n)$ contain an $\mm_S$-primary complete intersection of socle degree $\sum_{j=1}^{n-1}(d_j-1)$, we have  $[I + (x_n)]_d =[L + (x_n)]_d =[S]_d$ for all $d \geq d_n$.
On the other hand, when $d < d_n $  we have  $[L + (x_n) ]_d =  [ \LL^{\mathbf{e}}(I) + (x_n)]_d$ for $\mathbf{e}= (d_1, \ldots, d_{n-1}, \infty)$, and
applying Lemma \ref{LemmaHyperplaneSectionInequalityExtension} we obtain 
$ \HF(I + (x_n); d) \geq \HF( \LL^{\mathbf{e}}(I) + (x_n); d) = \HF(L + (x_n); d)$ for all $d < d_n$.
Combining the two cases $d < d_n$ and $d \geq d_n$ we get
$\HF(I + (x_n))\succeq  \HF(L + (x_n))$,
equivalently, 
$\HF(\overline{I}) \succeq \HF({L_0})$.
This inequality implies the containment of $\obfd$-LPP ideals $L_0 \subseteq \LL^{\overline{\bfd}}(\overline{I})$ by Proposition \ref{PropositionBasicPropertiesLPP}, since $L_0$ is a $\obfd$-LPP ideal of $\ovS$ 
by Proposition \ref{PropositionBasicPropertiesDecomposition} and $\HF({\overline{I}}) = \HF({\LL^{\obfd}(\overline{I})})$ by definition.
By Remark \ref{RemarkShortExactSequenceBetti} and Notation \ref{NotationUnderlyingVectorSpace}   we deduce
\begin{equation}\label{EqEstimateBettiHyperplaneSection}
\beta_{i,j}^{\ovS}\left(\overline{I}\right) \leq \beta_{i,j}^\ovS(L_0) + \beta_{i,j}^\ovS \left(V\left( \frac{\LL^{\overline{\bfd}}\left(\overline{I}\right)}{L_0}\right)\right).
\end{equation}

Next, we consider the first term in the exact sequence \eqref{EqShortExactSequence}.
We have the inclusion and isomorphsims of graded $\ovS$-modules
$$
\frac{S}{I:g}(-d_n) \cong \frac{(g)}{g(I:g)}(-1) \cong \frac{I+(g)}{I}(-1) \subseteq \frac{I:x_n}{I}(-1) \cong \frac{x_n(I:x_n)}{x_n I}
$$
and hence, as in the previous paragraph, we deduce
\begin{equation}\label{EqEstimateBettiFristTermSES}
\beta_{i,j}^{\ovS}\left(\frac{x_n(I:x_n)}{x_nI}\right) \leq  \beta_{i,j}^{\ovS} \left(V \left(\frac{I:x_n}{I+(g)}(-1)\right)\right)+ \beta_{i,j}^{\ovS}\left(\frac{S}{I:g}(-d_n)\right).
\end{equation}

Applying the same argument as above to the  $S$-ideals $J + (x_n)$ and $K+(x_n)$, we have  $[J + (x_n)]_d =[K + (x_n)]_d =[S]_d$ for  $d \geq d_n$ and
  $[K + (x_n) ]_d =  [ \LL^{\mathbf{e}}(J) + (x_n)]_d$ for  $d < d_n $,
  thus   by Lemma \ref{LemmaHyperplaneSectionInequalityExtension} we get
$$
\HF(J + (x_n)) \succeq \HF(K + (x_n)).
$$
Taking direct links of the ideals in this inequality via $\mathfrak{f}$ and $\wp$ respectively,
\eqref{EqHilbertFunctionLinkage} yields the inequality of Hilbert functions
$$
\HF\big(\mathfrak{f}:(J + (x_n))\big) \preceq \HF\big(\wp:(K + (x_n))\big).
$$
Notice that $\mathfrak{f}:(J + (x_n))=I \cap (g) = g(I:g)$ and $\wp:(K + (x_n)) = x_n^{d_n-1}(L:x_n^{d_n-1})$,
therefore, since  $\deg(g)=\deg(x_n^{d_n-1})$,
the inequality of Hilbert functions becomes
\begin{equation}\label{EqInequalityHilbertFunctionProof}
\HF(I:g) \preceq \HF(L:x_n^{d_n-1}).
\end{equation}

Let $I_\star\subseteq \ovS$ denote the image of $I:g$ in $\ovS \cong S/(x_n)$,
so  $I:g = I_\star S + (x_n)\subseteq S$.
Since  $I_\star$ contains a complete intersection of degree sequence $\obfd$,
 we may consider $\LL^\obfd(I_\star) \subseteq \ovS$.
By induction $\beta_{i,j}^\ovS(I_\star) \leq \beta_{i,j}^\ovS(\LL^\obfd(I_\star))$ for all $i,j$, or,
equivalently, 
 \begin{equation}\label{EqEstimateBettiLastComponentI}
  \beta_{i,j}^\ovS\left(\frac{S}{I_\star S + (x_n)} \right) \leq \beta_{i,j}^\ovS\left(\frac{S}{\LL^\obfd(I_\star) S + (x_n)}\right).
 \end{equation}

Observe that $L:x_n^{d_n-1}=L_{d_n-1}S + (x_n)\subseteq S$.
The variable  $x_n$ is a  non-zerodivisor modulo the extended ideals $I_\star S$ and $L_{d_n-1}S$;
 combining with  \eqref{EqInequalityHilbertFunctionProof}, we deduce the inequality $\HF({I_\star}) \preceq \HF(L_{d_n-1})$.
Since $\HF({I_\star}) = \HF(\LL^\obfd(I_\star))$ by definition, 
we have $ \HF(\LL^\obfd(I_\star)) \preceq \HF(L_{d_n-1})$,
and since both   $\LL^\obfd(I_\star) $ and $L_{d_n-1}$ are  $\obfd$-LPP ideals of $\ovS$, 
we conclude by Proposition \ref{PropositionBasicPropertiesLPP} that $\LL^\obfd(I_\star) \subseteq L_{d_n-1}\subseteq \ovS$ and therefore $\LL^\obfd(I_\star)S+(x_n) \subseteq L_{d_n-1}S + (x_n)\subseteq S$.
Applying Remark \ref{RemarkShortExactSequenceBetti}  and Notation \ref{NotationUnderlyingVectorSpace} 
to the short exact sequence of graded $\ovS$-modules
$$
0 \longrightarrow 	 \frac{L_{d_n-1}S + (x_n)}{\LL^\obfd(I_\star)S+(x_n)} \longrightarrow \frac{S }{\LL^\obfd(I_\star)S+(x_n)} \longrightarrow \frac{S}{L_{d_n-1}S + (x_n)}  \longrightarrow 0.
$$
and combining with \eqref{EqEstimateBettiLastComponentI} we obtain
 \begin{equation}\label{EqEstimateBettiIcolon}
  \beta_{i,j}^\ovS\left(\frac{S}{I: g} \right) \leq\beta_{i,j}^\ovS\left(  V \left(  \frac{L_{d_n-1} S+ (x_n)}{\LL^\obfd(I_\star)S+(x_n)} \right)  \right)+  \beta_{i,j}^\ovS\left( \frac{S}{L_{d_n-1}S + (x_n)}   \right).
 \end{equation}

Finally, 
we combine all the information obtained thus far to estimate the Betti numbers of $\frac{I}{x_nI}$.
By \eqref{EqShortExactSequence}, \eqref{EqEstimateBettiHyperplaneSection}, \eqref{EqEstimateBettiFristTermSES}, and \eqref{EqEstimateBettiIcolon} we have for all $i,j\geq 0$
\begin{eqnarray*}
 \beta_{i,j}^\ovS \left( \frac{I }{x_n I}\right) & \leq & \beta_{i,j}^\ovS(L_0) + \beta_{i,j}^\ovS \left(  V\left( \frac{\LL^{\overline{\bfd}}\left(\overline{I}\right)}{L_0}\right) \right) +\beta_{i,j}^{\ovS} \left(V \left(\frac{I:x_n}{I+(g)}(-1)\right)\right)\\
  &+&\beta_{i,j}^\ovS\left(  V \left(  \frac{L_{d_n-1}S + (x_n)}{\LL^\obfd(I_\star)S+(x_n)}(-d_n) \right)  \right)+  \beta_{i,j}^\ovS\left( \frac{S}{L_{d_n-1} S+ (x_n)}  (-d_n) \right)\\
& =& \beta_{i,j}^\ovS(L_0) + \beta_{i,j}^\ovS(W)+ \beta_{i,j}^\ovS\left( \frac{\ovS}{L_{d_n-1}} (-d_n)  \right) =: B_{i,j}
\end{eqnarray*}
where $W$ is the graded vector space
$$
W= V\left( \frac{\LL^{\overline{\bfd}}\left(\overline{I}\right)}{L_0} \oplus \frac{I:x_n}{I+(g)}(-1) \oplus  \frac{L_{d_n-1} S+ (x_n)}{\LL^\obfd(I_\star)S+(x_n)}(-d_n) \right).
$$
We claim that the sum $B_{i,j}$ is precisely the Betti number $\beta_{i,j}^\ovS(\frac{L}{x_nL})$.
This follows from Lemma \ref{LemmaBettiHyperplaneSectionStable} once we verify that the graded vector spaces $W$ and $\bigoplus_{h=1}^{d_n-1}\frac{L_h}{L_{h-1}}(-h)$ have the same Hilbert function.
However, this is true because in each step \eqref{EqShortExactSequence}, \eqref{EqEstimateBettiHyperplaneSection}, \eqref{EqEstimateBettiFristTermSES}, and \eqref{EqEstimateBettiIcolon}  we replaced a graded $\ovS$-module by another one with the same Hilbert function,
and the Hilbert functions of $\frac{I}{x_nI }$ and $\frac{L}{x_nL }$ coincide.

We have proved that $\beta_{i,j}^\ovS(\frac{I}{x_nI})\leq\beta_{i,j}^\ovS(\frac{L}{x_nL})$ for all $i,j\geq 0$, and, as already observed, this concludes the proof.
\end{proof}

\section{Examples}\label{SectionExamples}

We conclude this paper by illustrating the upper bounds on Betti tables obtained from Theorem \ref{TheoremMainInBodyPaper} in some specific examples.

Throughout the section we assume  $\ch(\Bbbk)=0$.
We adopt the usual ``Macaulay notation'' for writing Betti tables, placing the  number $\beta_{i,j}$ in column $i$ and row $j-i$.
Graded Betti numbers of lex ideals are determined by the well-known Eliahou-Kervaire  formulas \cite{EK90},
whereas  graded Betti numbers of  LPP ideals can be calculated via  the formulas of \cite{M08}.

\begin{example}\label{ExamplePointsCodim3}
Let $\Omega \subseteq \mathbb{P}^3$ be a 0-dimensional complete intersection of degrees $(4,4,8)$. 
Let $\Gamma \subseteq \Omega $ be a closed subscheme with Hilbert function 
$$\HF(A /I_\Gamma) = (1, 4, 10, 20, 32, 44, 56, 68, 79, 88, 94, 96, 96, 96, \ldots)$$
where $A= \Bbbk[x_0, x_1, x_2, x_3]$ is the homogeneous coordinate ring of $\mathbb{P}^3$.
By going modulo a general linear form in $A/I_\Gamma$, we reduce to considering
 Artinian algebras $R =S/I$ where  $S= \mathbb{\Bbbk}[x_1,x_2,x_3]$ with $\HF(R) = (1, 3, 6, 10, 12, 12, 12, 12, 11, 9, 6, 2)$ and  $I$ contains a regular sequence of degrees $(4,4,8)$.
Computing the lex ideal and the $(4,4,8)$-LPP ideal of $I$ in $S$,  we obtain
the upper bounds for the Betti table of $I_\Gamma$ provided  respectively by the Bigatti-Hulett-Pardue Theorem and  Theorem \ref{TheoremMainInBodyPaper}:

$$
\begin{array}{r|ccc}
& 0 & 1 & 2 \\
\hline
4 &  3 & 3 &1 \\
5 &  3& 5 &2 \\
6 &  2 & 3 &1 \\
7 &  1 & 2 &1 \\
8 &  2 & 4 &2 \\
9 &  3 & 5 &2 \\
10 & 3 & 6 &3 \\
11 &  4 & 8 &4 \\
12 & 2 & 4 &2 
\end{array}
\hspace*{3cm}
\begin{array}{r|cccc}
& 0 & 1 & 2 \\
\hline
4 &  3 & 1 &- \\
5 &  1& 2 &1 \\
6 &  - & 1 &- \\
7 &  - & - &- \\
8 &  1 & - &- \\
9 &  - & - &- \\
10 &- & 1 &- \\
11 & 1 & 3 &1 \\
12 & - & 1 &2 
\end{array}
$$
\end{example}

\begin{example}\label{ExampleCurveCodim2}
Let $\Omega \subseteq \mathbb{P}^3$ be a 1-dimensional complete intersection of degrees  $(4,4)$,
and $\Gamma \subseteq \Omega $ a curve with Hilbert series
$$\HS(A /I_\Gamma) = \frac{1+2t + 3t^2 +4t^3 +t^4 + t^5  - t^7 }{(1-t)^2}.$$
As before, we reduce to considering 1-dimensional algebras $R =S/I$ where  $S= \mathbb{\Bbbk}[x_1,x_2,x_3]$, $\HF(R) = (1, 3, 6, 10, 11, 12, 11, 11, 11, \ldots)$ and  $I$ contains a regular sequence of degrees $(4,4)$.
The upper bounds for the Betti table of $I_\Gamma$ obtained via the Bigatti-Hulett-Pardue Theorem and  Theorem \ref{TheoremMainInBodyPaper} are respectively

$$
\begin{array}{r|ccc}
& 0 & 1 & 2 \\
\hline
4  & 4 & 4 &1 \\
5  & 1& 2 &1 \\
6  & 3 & 5 &2 \\
7  & 1 & 2 &1 \\
8  & 1 & 2 &1 \\
9  &1 & 2 &1 \\
10  &1 & 2 &1 \\
11  & 1 & 1 &- \\
\end{array}
\hspace*{3cm}
\begin{array}{r|ccc}
& 0 & 1 & 2 \\
\hline
4 & 4 & 3 &1 \\
5 & -& - &- \\
6 & 1 & 3 &1 
\end{array}
$$
\end{example}

In the last example, we  apply Theorem \ref{TheoremMainInBodyPaper} to an ideal whose largest complete intersection 
does not satisfy our assumption on degrees.
The resulting bounds are thus between the ones of the Bigatti-Hulett-Pardue Theorem and the optimal ones predicted by Conjecture \ref{ConjectureLPP}.

\begin{example}\label{ExampleComparison3Bounds}
Let $ S= \Bbbk[x_1, x_2, x_3, x_4]$
and $I\subseteq S$ be a homogeneous ideal containing a complete intersection of degrees $(3,3,3)$ and with  
$$
\HF(S/I) = (1, 4, 10, 14, 17, 18, 17, 17, 17, \ldots).
$$
We compute the lex ideal, the $(3,3,5,\infty)$-LPP ideal, and the $(3,3,3,\infty)$-LPP ideal of $I$ in $S$ and obtain
the upper bounds for the Betti table of $I$ provided  respectively by the Bigatti-Hulett-Pardue Theorem,  Theorem \ref{TheoremMainInBodyPaper}, and Conjecture \ref{ConjectureLPP}:
  
$$
\begin{array}{r|cccc}
& 0 & 1 & 2 & 3 \\
\hline
3 &  6 & 9 &5 &1 \\
4 &  3 & 8 &7 &2 \\
5 &  5 & 12 &10 &3 \\
6 &  5 & 13 &11 &2 \\
7 &  2 & 6 &6 &2 \\
8 &  2 & 6 &6 &2 \\
9 &  2 & 5 &4 &1 \\
10 &  1 & 3 &3 &1 \\
11 &  1 & 3 &3 &1 \\
12 &  1 & 3 &3 &1 \\
13 &  1 & 3 &3 &1 \\
14 &  1 & 3 &3 &1 \\
15 &  1 & 3 &3 &1 \\
16 &  1 & 3 &3 &1 \\
17 &  1 &2 &1 &- \\
\end{array}
\hspace*{1cm}
\begin{array}{r|cccc}
& 0 & 1 & 2 & 3 \\
\hline
3 &  6 & 8 &4 &1 \\
4 &  2 & 5 &4 &1 \\
5 &  3 &6 &4 &1 \\
6 &  2 & 6 &6 &2 \\
7 &  - & 1 &1 &- \\
\end{array}
\hspace*{1cm}
\begin{array}{r|cccc}
& 0 & 1 & 2 & 3 \\
\hline
3 &  6 & 6 &4 &1 \\
4 &  - & 2 &-&- \\
5 &  - & 1 &- &- \\
6 &  1 & 3 &4 &1 \\
\end{array}
$$

\end{example}

Example \ref{ExampleComparison3Bounds} points to a  general phenomenon:

\begin{remark}
Suppose that an ideal $I\subseteq S$ contains two complete intersections $\mathfrak{f}, \mathfrak{g} $ with degree sequences $\bfd , \mathbf{e}$ such that $d_i \leq e_i$ for all $i$.
If both $\LL^{\bfd}(I)$ and $\LL^\mathbf{e}(I)$ exist, then  $\LL^{\mathbf{e}}(\LL^\mathbf{d}(I)) = \LL^\mathbf{e}(I)$,
 and by \cite[Theorem 8.1]{MM11}
we deduce that $\beta_{i,j}^S(\LL^\mathbf{d}(I)) \leq \beta_{i,j}^S(\LL^\mathbf{e}(I))$ for every $i,j$.

Furthermore,   when the field $\Bbbk$ is infinite there exists a complete intersection $\mathfrak{f} \subseteq I$ 
whose degree sequence is the smallest possible componentwise.
To see this, choose $\mathfrak{f}=(f_1, \ldots, f_c)$ with the lexicographically least possible  degree sequence,
and assume by contradiction that there exists $\mathfrak{g}=(g_1, \ldots, g_{c'})\subseteq I$ with $\deg(g_k) < \deg(f_k)$ for some $k$;
we may harmlessly assume that $c=c'=\codim (I)$.
The ideal $\frac{(g_1, \ldots, g_k) + (f_1, \ldots, f_{k-1})}{(f_1, \ldots, f_{k-1})}$ has positive codimension in the Cohen-Macaulay ring $S/(f_1, \ldots, f_{k-1})$, 
and is generated in degrees at most $ \deg(g_k)$.
By a standard prime avoidance argument, 
there exists a form $f'_k \in  I$ of degree at most $\deg(g_k)$ such that $\{f_1, \ldots, f_{k-1}, f'_k\}$ is a regular sequence;
completing this to a maximal regular sequence in $I$ we obtain a contradiction to the choice of $\mathfrak{f}$.

In conclusion, for every $I\subseteq S$ there exists a unique degree sequence $\bfd$ such that Conjecture \ref{ConjectureLPP} yields the sharpest possible bounds for the whole Betti table of $I$.
\end{remark}

\subsection*{Acknowledgments}
The first author would like to thank Bernd Ulrich for some helpful discussions on this subject.

\end{document}